\newtheorem{theorem}{Theorem}[section]
\newtheorem{lemma}[theorem]{Lemma}
\theoremstyle{definition}
\def\Longmapsto{\DOTSB\mapstochar\Longrightarrow}
\newcommand\pair[2]{\left\langle #1, #2 \right\rangle}
\newcommand\triple[3]{\left\langle #1, #2, #3 \right\rangle}
\newcommand{\<}{\left\langle}
\renewcommand{\>}{\right\rangle}
\begin{document}

\title{On the number of variables in undecidable superintuitionistic propositional calculi}

\author{Grigoriy V. Bokov\\
        \small Department of Mathematical Theory of Intelligent Systems\\
        \small Lomonosov Moscow State University\\
        \small Moscow, Russian Federation\\
        \small E-mail: bokov@intsys.msu.ru}

\maketitle

\begin{abstract}
In this paper, we construct an undecidable 3-variable superintuitionistic propositional calculus, i.e., a finitely axiomatizable extension of the intuitionistic propositional calculus with axioms containing only 3 variables. Since there are no 2-variable superintuitionistic propositional calculi, this is the minimal possible number of variables.
\end{abstract}

\maketitle

\section{Introduction}

Decidability is the important property of propositional calculi, it means that the set of their derivable formulas (or theorems) can be effectively determined. A natural question is how to separate classes of decidable and undecidable calculi. On the other hand, since undecidable propositional calculi can be used as a base for obtaining ``negative'' results to various algorithmic problems, it is of interest to find the simplest possible calculus of that class. There are many possible ways to separate decidable and undecidable calculi. A significant and simplest way is to describe the number of variables in their axioms.

In 1949, Linial and Post~\cite{LinialPost:49} found the first undecidable propositional calculus. In 1975, Hughes and Singletary~\cite{HughesSingletary:75:TPI} proved that there is an undecidable propositional calculus with axioms containing 3 variables. In 1976, Hughes~\cite{Hughes:76:TVIC} constructed an undecidable implicational propositional calculus using axioms in 2 variables. Finally, Gladstone in 1979~\cite{Gladstone:79:DOPC} proved that every 1-variable propositional calculus is decidable.

The first undecidable superintuitionistic propositional calculus was built in 1978 by Shehtman~\cite{Shehtman:78:USC, Shehtman:82:UPC}. Axioms of this calculus contain 7 variables. Later Chagrov in 1994~\cite{Chagrov:94:UPSL} did the same using axioms with only 4 variables. In~\cite[Sections 16.9]{Chagrov:97:ML} he noted that it is unknown whether there exist undecidable superintuitionistic propositional calculi with axioms in 2 or 3 variables.

In~\cite{Gladstone:70:NVA} Gladstone proved that the following formula
\begin{equation*}
  A = (p \to q) \to ((q \to r) \to (p \to r))
\end{equation*}
is not derivable from the set of all 2-variable tautologies by modus ponens and substitution. Since $A$ is an intuitionistic tautology, therefore a 2-variable propositional calculus cannot derive all intuitionistic tautologies. If we combine this with Gladstone's result for 1-variable propositional calculi, we get that there are no undecidable superintuitionistic propositional calculi with axioms containing less than 3 variables. The aim of this paper is to construct an undecidable 3-variable superintuitionistic propositional calculus.

This paper is organized as follows. In the next section we introduce the basic terminology and notation. In Section 3 we state and prove our main result. Finally, in Section 4 we give some concluding remarks and discuss further directions of research.

\section{Definitions}

In this section, we recall definitions of the intuitionistic propositional calculus and Kripke semantics. For more details we refer the reader to~\cite{Chagrov:97:ML}.

First, we introduce some notation. Let us consider the language consisting of an infinite set of propositional variables $\mathcal{V}$, brackets, and the signature \mbox{$\Sigma = \{\bot, \wedge, \vee, \to\}$}, where $\bot$ is the constant symbol, $\wedge$, $\vee$ and $\to$ are binary connectives. Letters $p, q, x, y$, etc., are used to denote propositional variables. We define $\neg$, $\leftrightarrow$ and $\top$ as the usual abbreviations: $\neg A := A \to \bot$, $A \leftrightarrow B = (A \to B) \wedge (B \to A)$, and $\top = \neg \bot$.

\emph{Propositional formulas} or $\Sigma$-\emph{formulas} are built up from the signature $\Sigma$, propositional variables from $\mathcal{V}$, and brackets in the usual way. For example, the following notations
\begin{equation*}
  x, \quad \neg A, \quad (A \wedge B), \quad (A \vee B), \quad (A \to B)
\end{equation*}
are formulas if $A$, $B$ are formulas. Capital letters $A, B, C$, etc., are used to denote propositional formulas. Throughout the paper, we omit some parentheses in formulas whenever it does not lead to confusion.

By a \emph{propositional calculus} or a $\Sigma$-\emph{calculus} we mean a finite set $P$ of $\Sigma$-formulas referred to as \emph{axioms} together with two rules of inference:

1) \emph{modus ponens}
\begin{equation*}
  A,~A \to B~\vdash~B,
\end{equation*}

2) \emph{substitution}
\begin{equation*}
  A~\vdash~\sigma A,
\end{equation*}
where $\sigma A$ is a substitution instance of $A$, i.e., the result of applying a substitution $\sigma$ to the formula $A$.

Denote by $[P]$ the set of derivable (or provable) formulas of a calculus $P$. A \emph{derivation} in $P$ is defined from the axioms and the rules of inference in the usual way. The statement that a formula $A$ is derivable from $P$ is denoted by $P \vdash A$.

Let us introduce the following pre-order relation on the set of all propositional calculi. We write $P_1 \leq P_2$ (or, equivalently, $P_2 \geq P_1$) if each derivable formula of $P_1$ is also derivable from $P_2$, i.e., if $[P_1] \subseteq [P_2]$. We write $P_1 \sim P_2$ and say that two calculi $P_1$ and $P_2$ are \emph{equivalent} if $[P_1] = [P_2]$. Finally, we write $P_1 < P_2$ if $[P_1] \subsetneq [P_2]$.

An \emph{intuitionistic Kripke frame} is a pair $\mathfrak{F} = \< W, R \>$ consisting of a nonempty set $W$ and a partial order $R$ on $W$, which is reflexive, transitive and antisymmetric, i.e., $\mathfrak{F}$ is just a partially ordered set. The elements of $W$ are called the \emph{points} (or \emph{worlds}) of the frame $\mathfrak{F}$, and the relation $R$ is called the \emph{accessibility relation}. If for some $w,w' \in W$ the relation $w R w'$ holds, we say that $w'$ is \emph{accessible} from $w$ or $w$ sees $w'$. We write $w \leq_R w'$ (or $w' \geq_R w$) iff $w R w'$.

A \emph{valuation} in an intuitionistic frame $\mathfrak{F} = \< W, R \>$ is a map $\mathfrak{V}$ associating with each propositional variable $p \in \mathcal{V}$ some (possibly empty) subset $\mathfrak{V}(p)$ of $W$ such that, for every $w \in \mathfrak{V}(p)$ and every $w' \in W$, $w \leq_R w'$ implies $w' \in \mathfrak{V}(p)$.

An \emph{intuitionistic Kripke model} is a pair $\mathfrak{M} = \< \mathfrak{F}, \mathfrak{V} \>$, where $\mathfrak{F}$ is an intuitionistic frame and $\mathfrak{V}$ is a valuation in $\mathfrak{F}$.

Let $\mathfrak{M} = \< \mathfrak{F}, \mathfrak{V} \>$ be an intuitionistic Kripke model and $w$ be a point in the frame $\mathfrak{F} = \< W, R \>$. By induction on the construction of a formula $A$ we define a relation $(\mathfrak{M}, w) \models A$, which is read as $A$ \emph{is true at} $w$ \emph{in} $\mathfrak{M}$:

\bigskip

\begin{tabular}{lcl}
  $(\mathfrak{M}, w) \not\models \bot$ & & \\
  $(\mathfrak{M}, w) \models p$ & $\Longleftrightarrow$ & $w \in \mathfrak{V}(p)$; \\
  $(\mathfrak{M}, w) \models A \wedge B$ & $\Longleftrightarrow$ & $(\mathfrak{M}, w) \models A$ and $(\mathfrak{M}, w) \models B$; \\
  $(\mathfrak{M}, w) \models A \vee B$ & $\Longleftrightarrow$ & $(\mathfrak{M}, w) \models A$ or $(\mathfrak{M}, w) \models B$; \\
  $(\mathfrak{M}, w) \models A \to B$ & $\Longleftrightarrow$ & for all $w' \in W$ such that $w \leq_R w'$, \\
  & & $(\mathfrak{M}, w') \models A$ implies  $(\mathfrak{M}, w') \models B$. \\
\end{tabular}

\medskip

\noindent From the definition it follows that

\medskip

\begin{tabular}{lcl}
  $(\mathfrak{M}, w) \models \top$ & & \\
  $(\mathfrak{M}, w) \models \neg A$ & $\Longleftrightarrow$ & for all $w' \in W$ such that $w \leq_R w'$, $(\mathfrak{M}, w') \not\models A$. \\
\end{tabular}

\bigskip

\noindent If $(\mathfrak{M}, w) \models A$ does not hold, i.e., $(\mathfrak{M}, w) \not\models A$, we say that $A$ is \emph{refuted at the point} $w$ \emph{in} $\mathfrak{M}$.

We say that $A$ is \emph{valid in a model} $\mathfrak{M} = \< \mathfrak{F}, \mathfrak{V} \>$ defined on a frame $\mathfrak{F} = \< W, R \>$ if $(\mathfrak{M}, w) \models A$ for all $w \in W$; if $A$ is valid in $\mathfrak{M}$, we write $\mathfrak{M} \models A$. We say that $A$ is \emph{valid in a frame} $\mathfrak{F} = \< W, R \>$ if $A$ is valid in every model based on $\mathfrak{F}$; if $A$ is valid in $\mathfrak{F}$, we write $\mathfrak{F} \models A$. We say that $A$ is \emph{true at a point} $w$ in a frame $\mathfrak{F}$ if $(\mathfrak{M}, w) \models A$ for every model $\mathfrak{M}$ defined on $\mathfrak{F}$; if $A$ is true at the point $w$ in frame $\mathfrak{F}$, we write $(\mathfrak{F}, w) \models A$. If $\mathfrak{M}$ is fixed we write $w \models A$ instead of $(\mathfrak{M}, w) \models A$.

We define the intuitionistic propositional calculus $\mathbf{Int}$ as the smallest propositional calculus containing the following set of axioms:
\begin{center}
  \begin{tabular}{ll}
    $(\to_1)$ & $p \to (q \to p)$ \\
    $(\to_2)$ & $(p \to (q \to r)) \to ((p \to q) \to (q \to r))$ \\
    $(\wedge_1)$ & $p \wedge q \to p$ \\
    $(\wedge_2)$ & $p \wedge q \to q$ \\
    $(\wedge_3)$ & $p \to (q \to p \wedge q)$ \\
    $(\vee_1)$ & $p \to p \vee q$ \\
    $(\vee_2)$ & $q \to p \vee q$ \\
    $(\vee_3)$ & $(p \to r) \to ((q \to r) \to (p \vee q \to r))$ \\
    $(\neg_1)$ & $(p \to q) \to ((p \to \neg q) \to \neg p)$ \\
    $(\neg_2)$ & $p \to (\neg p \to q)$ \\
  \end{tabular}
\end{center}
It is well known that
\begin{equation*}
  \mathbf{Int} \vdash A \quad \Longleftrightarrow \quad \mathfrak{F} \models A, \text{ for every Kripke frame } \mathfrak{F}.
\end{equation*}

By a \emph{superintuitionistic propositional calculus} we mean a propositional calculus obtained from $\mathbf{Int}$ by adding a finite set of new axioms. If $M$ is a finite set of propositional formulas, then a propositional calculus obtained from $\mathbf{Int}$ by adding new axioms $M$ is denoted by $\mathbf{Int} + M$. Since
\begin{equation*}
  \mathbf{Int} + \{A_1, \ldots, A_n\} \sim \mathbf{Int} + A_1 \wedge \ldots \wedge A_n,
\end{equation*}
we can assume that a superintuitionistic propositional calculus is a calculus $\mathbf{Int} + A$ for some intuitionistic propositional formula $A$.

\section{Main result}

Our main result is the following theorem.

\begin{theorem} \label{T:main}
There is a 3-variable intuitionistic propositional formula $A$ such that $\mathbf{Int} + A$ is undecidable.
\end{theorem}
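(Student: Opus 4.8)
The plan is to encode an undecidable problem --- such as the word problem for semigroups, or the halting problem via a semi-Thue (string rewriting) system --- into derivability in a superintuitionistic calculus, and then to compress the number of variables down to $3$ by a coding trick. The natural route is to follow the strategy behind the Shehtman/Chagrov constructions: a finite rewriting system $\mathcal{R}$ with undecidable word problem is simulated by formulas over a Kripke frame so that a ``computation step'' of $\mathcal{R}$ corresponds to a step in a derivation (or to validity on a suitable class of frames). In the usual presentation one uses one propositional variable per symbol of the alphabet of $\mathcal{R}$, plus a few auxiliary variables for ``tape markers'', which already gives a fixed but larger-than-$3$ bound (Chagrov gets $4$). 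To get down to $3$, the idea is to replace the many symbol-variables by a single variable together with a uniform \emph{G\"odel-style numbering} of symbols: a symbol with index $i$ is represented not by its own variable but by a fixed formula $D_i(x)$ built from one variable $x$ using implication nesting (so that the frame semantics can ``read off'' the index $i$ from the pattern of truth of $D_i(x)$ up a chain). Two further variables, say $y$ and $z$, are then enough to delimit words, to mark the position being rewritten, and to drive the derivation. All the rewrite rules of $\mathcal{R}$, being finitely many, can be folded into a single intuitionistic axiom $A$ over $\{x,y,z\}$ by conjoining their encodings, using the equivalence $\mathbf{Int}+\{A_1,\dots,A_n\}\sim\mathbf{Int}+A_1\wedge\dots\wedge A_n$ noted in the excerpt.

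The key steps, in order, are as follows. First, fix a semi-Thue system $\mathcal{R}$ over an alphabet $\{a_1,\dots,a_k\}$ with undecidable word problem (this is classical). Second, design the arithmetical coding $D_i(x)$ of symbols and a coding $\overline{w}$ of words by formulas over $\{x,y,z\}$, and verify the crucial ``readability'' lemma: over the relevant frames, a point forces $\overline{w}$ iff the chain above it decodes to precisely $w$, and distinct words give non-equivalent formulas. Third, for each rule $u\to v$ of $\mathcal{R}$ write an implication $\Phi_{u\to v}$ over $\{x,y,z\}$ that is intuitionistically valid exactly on frames permitting the substitution of $v$ for $u$ inside a coded word, and set $A:=\bigwedge_{(u\to v)\in\mathcal{R}}\Phi_{u\to v}\wedge(\text{structural axioms fixing the shape of admissible frames})$. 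Fourth --- the heart of the argument --- prove the simulation equivalence: for words $w,w'$, one has $w\equiv_{\mathcal{R}}w'$ iff $\mathbf{Int}+A\vdash \overline{w}\leftrightarrow\overline{w'}$ (or, in the frame formulation, iff a certain formula $\Psi_{w,w'}$ is valid in every frame validating $A$). The $(\Rightarrow)$ direction is a routine induction on the length of the $\mathcal{R}$-derivation; the $(\Leftarrow)$ direction requires building, from any $\mathcal{R}$-inequivalence, a finite Kripke countermodel that validates $A$ but refutes $\Psi_{w,w'}$ --- this is where the frame conditions packed into $A$ must be shown strong enough to force every refuting model to ``look like'' a genuine $\mathcal{R}$-computation, yet weak enough to be expressible with only $3$ variables. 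Finally, since $w\equiv_{\mathcal{R}}w'$ is undecidable, derivability in $\mathbf{Int}+A$ is undecidable.

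The main obstacle I anticipate is precisely the variable economy in step four's $(\Leftarrow)$ direction: with four or more variables one can afford separate ``control'' variables to rigidly pin down the frame shape (word boundaries, the active cell, left/right context), but with only $x,y,z$ one must make the \emph{same} variables do double or triple duty, and then the countermodel construction has to be carried out carefully so that spurious models --- ones that validate $A$ without encoding a legitimate computation --- are excluded. Concretely, the danger is that the formula $A$, being forced to be so parsimonious, either becomes too weak (admitting extraneous frames that make $\Psi_{w,w'}$ fail even when $w\equiv_{\mathcal{R}}w'$, breaking soundness of the simulation) or, if patched to be stronger, secretly smuggles in a fourth variable. Overcoming this requires a clever layered encoding --- most likely the implication-depth trick for symbol indices mentioned above, so that ``how far up the chain'' a point sits substitutes for what a dedicated variable would otherwise record --- together with a meticulous normal-form analysis of refuting models of $A$. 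I expect a substantial portion of the proof to be the verification that this single three-variable axiom $A$ does indeed characterize exactly the frames that faithfully mirror $\mathcal{R}$-computations, with the undecidability then following immediately.
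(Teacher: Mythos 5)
Your proposal is a plan rather than a proof: every step that carries the mathematical content (the symbol coding $D_i(x)$, the ``readability'' lemma, the rule formulas $\Phi_{u\to v}$, the normal-form analysis of refuting models) is announced but not constructed, and it is precisely in those constructions that the entire difficulty of the theorem lives. Beyond incompleteness, two points make the plan fail as stated. First, in your step four you propose to establish the $(\Leftarrow)$ direction by producing, for every $\mathcal{R}$-inequivalent pair $(w,w')$, a \emph{finite} Kripke countermodel validating $A$ and refuting $\Psi_{w,w'}$. This is impossible: for a finite frame, both validity of $A$ and refutation of $\Psi_{w,w'}$ are decidable properties, so enumerating finite frames in parallel with enumerating derivations in $\mathbf{Int}+A$ would decide $w\equiv_{\mathcal{R}}w'$, contradicting the very undecidability you start from. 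Any such reduction must rest on infinite countermodels; the paper constructs a single infinite frame $\mathfrak{F}$ (three interlocking infinite ladders of points $a^j_i,b^j_i$ together with points $e_{[s,m,n]}$ for reachable configurations) that validates the axiom and refutes exactly the codes of reachable configurations.

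Second, your choice of formalism is the one the paper explicitly warns against: string-rewriting and tag-system encodings work well for subcalculi of $\mathbf{Int}$, but are known to be very hard to push through for extensions of $\mathbf{Int}$ (the paper cites Popov and Skvortsov on this point). The paper instead simulates a Minsky machine, whose configurations are mere triples of natural numbers; each number is represented by \emph{depth} in a one-variable ladder of formulas $S_i[x],T_i[x]$ (one ladder for each of $r$, $p$, $q$, linked through $C_1=A^0_0$ and $C_2=B^0_0$), so no alphabet of symbol-variables is needed at all. The remaining subtlety --- writing finitely many axioms that nevertheless shift an unbounded counter $m$ to $m\pm1$ --- is resolved by the key formulas $F_k,G_k$ in combination with the substitution rule (Lemmas~\ref{L:KeyFormulas:3}, \ref{L:Equivalence} and \ref{L:Semantic:3}), a mechanism your sketch does not anticipate and which is the actual solution to the ``variable economy'' obstacle you correctly identify but do not overcome.
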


First, we recall what a Minsky machine is and encode configurations of a Minsky machine by superintuitionistic propositional formulas. Next, we construct a Kripke model refuting all codes of derivable configurations. Finally, we encode instructions of a Minsky machine $\mathcal{M}$ by a single superintuitionistic formula $A_{\mathcal{M}}$ and formally reduce the configuration problem of $\mathcal{M}$ to the derivation problem of a superintuitionistic propositional calculus $\textbf{Int} + A_{\mathcal{M}}$.

\subsection{Minsky machine}

There are many algorithmic formalisms to prove the undecidability of a propositional calculus~\cite{Bokov:2015:UPPC}. For example, the undecidability of a calculus contained in the classical~\cite{Bokov:2009}, intuitionistic~\cite{Bokov:2015:URA} propositional calculus or in another subcalculus~\cite{Bokov:2015:UPPC} can be easily proved by using tag systems. But for extensions of the intuitionistic propositional calculus, this is very hard~\cite{Popov:81,Skvortsov:85}. For this reason, in order to prove the undecidability of superintuitionistic propositional calculi we will use an algorithmic formalism which is called \emph{Minsky machines}~\cite{Minsky:67:CFI}. In~\cite{Chagrov:97:ML} Chagrov mentioned that it is the most convenient formalism for being simulated by modal and intuitionistic formulas.

In accordance with~\cite{Chagrov:97:ML} we define a \emph{Minsky machine} as a finite set of instructions for transforming triples $\triple{s}{m}{n}$ of natural numbers, called \emph{configurations}, where $s$ is the number of the instruction to be executed at the next step (referred to as the \emph{current machine state}), and $m, n \in \mathbb{N}$~\footnote{We assume that $\mathbb{N} = \{0, 1, 2, \ldots \}$.}. Each instruction has one of the following four forms:
\begin{equation*}
  \begin{array}{ll}
  s \ \mapsto \ \triple{t}{1}{0}, & s \ \mapsto \ \triple{t}{-1}{0} / \triple{u}{0}{0}, \\
  s \ \mapsto \ \triple{t}{0}{1}, & s \ \mapsto \ \triple{t}{0}{-1} / \triple{u}{0}{0},
  \end{array}
\end{equation*}
where $s,t,u$ are the machine states. Note that all Minsky machines are assumed to be deterministic, i.e., they may not contain distinct instructions with the same numbers.

As an example, let us consider the applying of first two instructions. The instruction
\begin{equation*}
  s \ \mapsto \ \triple{t}{1}{0}
\end{equation*}
transforms $\triple{s}{m}{n}$ into $\triple{t}{m+1}{n}$, and the instruction
\begin{equation*}
  s \ \mapsto \ \triple{t}{-1}{0} / \triple{u}{0}{0}
\end{equation*}
transforms $\triple{s}{m}{n}$ into $\triple{t}{m-1}{n}$ if $m > 0$ and into $(u, m, n)$ if $m = 0$. The meaning of the others is defined analogously.

Let $\mathcal{M}$ be a Minsky machine, then the notation $\triple{s}{m}{n} \stackrel{\mathcal{M}}{\longmapsto} \triple{t}{k}{l}$ means that the configuration $\triple{t}{k}{l}$ is obtained from $\triple{s}{m}{n}$ by applying an instruction of machine $\mathcal{M}$ once. We write $\triple{s}{m}{n} \stackrel{\mathcal{M}}{\Longmapsto} \triple{t}{k}{l}$ if the configuration $\triple{t}{k}{l}$ is obtained from $\triple{s}{m}{n}$ by applying instructions of machine $\mathcal{M}$ in finitely many steps (possibly, in $0$ steps). Particularly, we always have $\triple{s}{m}{n} \stackrel{\mathcal{M}}{\Longmapsto} \triple{s}{m}{n}$.

The \emph{configuration problem} for a Minsky machine $M$ and a configuration $\triple{s}{m}{n}$ is, given a configuration $\triple{t}{k}{l}$, to determine whether $\triple{s}{m}{n} \stackrel{\mathcal{M}}{\Longmapsto} \triple{t}{k}{l}$.

\begin{theorem}[Minsky,~\cite{Minsky:67:CFI}] \label{T:Minsky}
There exist a Minsky machine $\mathcal{M}$ and a configuration $\triple{s}{m}{n}$ for which the configuration problem is undecidable.
\end{theorem}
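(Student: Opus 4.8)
The statement is Minsky's classical theorem, so the plan is to run the standard reduction from the halting problem for Turing machines. The starting point is the existence of a recursively enumerable but non‑recursive set $W \subseteq \mathbb{N}$ — for instance the diagonal halting set $\{ e : \varphi_e(e) \text{ halts} \}$, or equivalently the set of inputs on which a fixed universal Turing machine halts. The goal is to manufacture a single Minsky machine $\mathcal{M}$ (in exactly the restricted form fixed in the definition above: the four instruction shapes, deterministic) together with a single start configuration $\triple{s_0}{m_0}{n_0}$, so that deciding reachability of an arbitrary target configuration would decide membership in $W$.

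The engine of the construction is the classical fact that Minsky machines are Turing complete. First one simulates an arbitrary Turing machine $\mathcal{T}$ by a machine with several counters: store the tape contents to the left and to the right of the head as natural numbers written in base $b$, where $b$ is the size of the tape alphabet, keep the scanned symbol and the state of $\mathcal{T}$ in the finite control, and use a couple of extra counters as scratch. One step of $\mathcal{T}$ (read, overwrite, move, change state) becomes a bounded block of the operations ``multiply a counter by $b$'', ``divide a counter by $b$ with remainder'', and ``add or subtract a constant'', each implementable from the primitives increment, decrement‑or‑jump, and test‑for‑zero by transfer loops between a working counter and a scratch counter. Second, one compresses the counters $c_1,\dots,c_k$ down to two by Minsky's prime‑power trick: keep the single number $p_1^{c_1}\cdots p_k^{c_k}$ in counter $1$ and use counter $2$ purely as scratch; then incrementing $c_i$, decrementing $c_i$, and testing $c_i=0$ become ``multiply by $p_i$'', ``divide by $p_i$'', and ``test divisibility by $p_i$'', again realized by transfer loops. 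The outcome is a Minsky machine of the permitted form that faithfully simulates $\mathcal{T}$.

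It remains to package this so that the start configuration is fixed while the target varies. Fix a total recursive injection $f$ whose range is the non‑recursive r.e.\ set $W$ (any infinite r.e.\ set is such a range). Build $\mathcal{M}$ so that from $\triple{s_0}{f(0)}{0}$ it repeatedly does the following, using a distinguished state $q$ only at these checkpoints: sitting in the configuration $\triple{q}{v}{0}$, it recovers the index $k$ with $f(k)=v$ by searching $k=0,1,2,\dots$ (which terminates precisely because $v\in\mathrm{range}(f)$), computes $f(k+1)$ with the simulation machinery above, and moves to $\triple{q}{f(k+1)}{0}$. Determinism is respected because $f$ is injective, so $v$ alone determines $k$; hence the orbit of $\triple{s_0}{f(0)}{0}$ meets $\{\triple{q}{v}{0} : v\in\mathbb{N}\}$ in exactly the configurations $\triple{q}{v}{0}$ with $v\in W$. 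Consequently an algorithm solving the configuration problem for $\mathcal{M}$ and $\triple{s_0}{f(0)}{0}$ would, on input $\triple{q}{v}{0}$, decide whether $v\in W$, which is impossible.

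The only conceptually delicate point, small as it is, is the last step: a deterministic Minsky machine started from a fixed configuration traces a single orbit, so the reachable set is just the set of points on one trajectory, and one must arrange that the membership problem for that trajectory is already undecidable — hence the device of enumerating $W$ injectively and re‑deriving the ``program counter'' from the current value rather than storing it. Everything else is the routine but lengthy bookkeeping of implementing multiplication, division with remainder, and divisibility tests from the three primitive operations, and of checking that the finite control stays finite and deterministic; none of it is deep, but it has to be carried out with care given how austere the instruction set is.
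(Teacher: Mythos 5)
The paper does not actually prove this theorem; it imports it from Minsky's book as a black box, so there is no proof of record to compare yours against. Judged on its own terms, your architecture is the standard and correct one: you rightly observe that, since the machine is deterministic and the start configuration is fixed, the set of reachable configurations is a single orbit, and you make that orbit non-recursive by having the machine enumerate a non-recursive r.e.\ set $W$ via an injective recursive enumeration $f$, with checkpoint configurations $\triple{q}{v}{0}$ occurring on the trajectory exactly for $v\in W$. That is precisely the right device for converting ``halting is undecidable'' into ``the configuration problem for one fixed start is undecidable,'' and you correctly single it out as the only conceptually delicate point besides raw simulation.

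The one step that would fail as written is the inter-checkpoint computation. A two-counter machine cannot perform arbitrary recursive transformations on a \emph{directly represented} counter value: by the Barzdins--Schroeppel obstruction, no two-counter machine computes $n\mapsto 2^n$ (or $n\mapsto n^2$) when $n$ sits plainly in a counter; the prime-power simulation you invoke only works when the input already arrives G\"odel-encoded. So ``sitting in $\triple{q}{v}{0}$, recover $k$ with $f(k)=v$ and compute $f(k+1)$'' is not implementable for a general recursive injection $f$ if $v$ is stored raw, because the first move of your simulation machinery would have to convert $v$ into $2^v$, which is exactly the impossible step. The repair is cheap: keep the checkpoint values themselves in encoded form, e.g.\ replace $W$ by $W'=\{2^v : v\in W\}$ (still r.e.\ and still non-recursive, since $v\in W$ iff $2^v\in W'$) and arrange checkpoints $\triple{q}{2^{f(k)}}{0}$; the reduction then sends $v$ to the target $\triple{q}{2^v}{0}$, and the passage from $2^{f(k)}$ to $2^{f(k+1)}$ takes place entirely inside the encoding, where two counters do suffice. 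With that adjustment your argument is complete.
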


Let $\mathcal{M}$ be a Minsky machine and $\triple{s_0}{m_0}{n_0}$ a configuration for which the configuration problem is undecidable.

\subsection{Encoding of configurations}

Let $p$, $q$ and $r$ be three distinct propositional variables. Now we define some propositional formulas using only variables $p$, $q$, $r$, which encode configurations of Minsky machines. Note that some basic ideas of defining these formulas was found in~\cite{Chagrov:97:ML} and~\cite{Rybakov:2006:CIVFL}.

First, let us define the following groups of propositional formulas constructed from variables $p$, $q$ and $r$. If
\begin{gather*}
  \begin{aligned}
    \qquad  S_{-2}[x] & \,=\, \neg x, \qquad
    & S_{-1}[x] & \,=\, T_{-2}[x] \to x, \\
    \qquad  T_{-2}[x] & \,=\, \neg \neg x, \qquad
    & T_{-1}[x] & \,=\, S_{-1}[x] \to S_{-2}[x] \vee T_{-2}[x],
  \end{aligned} \\
  \begin{aligned}
    S_i[x] & \,=\, T_{i-1}[x] \to S_{i-1}[x] \vee T_{i-2}[x], \\
    T_i[x] & \,=\, \quad S_i[x]   \to S_{i-1}[x] \vee T_{i-1}[x],
  \end{aligned}
\end{gather*}
for all $i \geq 0$, then we define

\emph{Groups $(A^0)$ and $(B^0)$}:
\begin{equation*}
  A_i^0 \,=\, S_{i+3}[r], \;\ B_i^0 \,=\, T_{i+3}[r] \;\ \text{ for all } i \geq -5.
\end{equation*}

\noindent Let $C_1 = A_0^0$ and $C_2 = B_0^0$, then

\emph{Groups $(A^1)$ and $(B^1)$}:
\begin{gather*}
  A_i^1 \,=\, S_{i+3}[p], \;\ B_i^1 \,=\, T_{i+3}[p] \;\ \text{ for } i \in \{-3, -4, -5\}, \\
  \begin{aligned}
    A_{-2}^1   & \,=\, B_{-3}^1 \to A_{-3}^1 \vee B_{-4}^1, \qquad
    & A_{-1}^1 & \,=\, B_{-2}^1 \to A_{-2}^1 \vee B_{-3}^1, \\
    B_{-2}^1   & \,=\, A_{-3}^1 \to      C_1 \vee B_{-3}^1, \qquad
    & B_{-1}^1 & \,=\, A_{-2}^1 \to A_{-3}^1 \vee B_{-2}^1,
  \end{aligned} \\
  \begin{aligned}
    A_i^1 & \,=\, C_2 \wedge B_{i-1}^1 \to C_1 \vee A_{i-1}^1 \vee B_{i-2}^1, \\
    B_i^1 & \,=\, C_2 \wedge A_{i-1}^1 \to C_1 \vee A_{i-2}^1 \vee B_{i-1}^1, \;\ \text{ for all } i \geq 0;
  \end{aligned}
\end{gather*}

\emph{Groups $(A^2)$ and $(B^2)$}:
\begin{gather*}
  A_i^2 \,=\, S_{i+3}[q], \;\ B_i^2 \,=\, T_{i+3}[q] \;\ \text{ for } i \in \{-3, -4, -5\}, \\
  \begin{aligned}
    A_{-2}^2   & \,=\, B_{-3}^2 \to A_{-3}^2 \vee B_{-4}^2, \qquad
    & A_{-1}^2 & \,=\, B_{-2}^2 \to A_{-2}^2 \vee B_{-3}^2, \\
    B_{-2}^2   & \,=\, A_{-3}^2 \to      C_2 \vee B_{-3}^2, \qquad
    & B_{-1}^2 & \,=\, A_{-2}^2 \to A_{-3}^2 \vee B_{-2}^2,
  \end{aligned} \\
  \begin{aligned}
    A_i^2 & \,=\, C_1 \wedge B_{i-1}^2 \to C_2 \vee A_{i-1}^2 \vee B_{i-2}^2, \\
    B_i^2 & \,=\, C_1 \wedge A_{i-1}^2 \to C_2 \vee A_{i-2}^2 \vee B_{i-1}^2, \;\ \text{ for all } i \geq 0.
  \end{aligned}
\end{gather*}

\noindent Note that the groups $(A^0)$, $(B^0)$ contain only variable $r$, $(A^1)$, $(B^1)$ contain only variables $r$, $p$, and $(A^2)$, $(B^2)$ contain only variables $r$, $q$. Now we define formulas encoding configurations of the Minsky machine $\mathcal{M}$.

\emph{Group $(E)$}:
\begin{multline*}
  E_{s,m,n} = A_{3s+2}^0 \wedge B_{3s+2}^0 \wedge A_{m+1}^1 \wedge B_{m+1}^1 \wedge A_{n+1}^2 \wedge B_{n+1}^2 \to \\
  \to A_{3s+1}^0 \vee B_{3s+1}^0 \vee A_m^1 \vee B_m^1 \vee A_n^2 \vee B_n^2,
\end{multline*}
for all $s,m,n \geq 0$. The formula $E_{s,m,n}$ is called the \emph{code} of a configuration $\triple{s}{m}{n}$.

Denote by $(A)$ and $(B)$ the following sets of formulas:
\begin{equation*}
  \begin{array}{lll}
    (A) & = & (A^0) \cup (A^1) \cup (A^2), \\
    (B) & = & (B^0) \cup (B^1) \cup (B^2), \\
  \end{array}
\end{equation*}
and by $M$ the set of formulas:
\begin{equation*}
  M = (A) \cup (B) \cup (E).
\end{equation*}

\subsection{Kripke model refuting codes of derivable configurations}

In this section, we construct a Kripke model $\mathfrak{M} = \< \mathfrak{F}, \mathfrak{V} \>$ refuting all formulas from $M$, i.e., for every formula from $M$, there exists a unique maximal point, at which this formula is refuted.

First, let us define the following equivalence relation $\sim_{\mathcal{M}}$ on the set of all configurations $\{ \triple{s}{m}{n} \mid s,m,n \geq 0\}$:
\begin{equation*}
  \triple{s}{m}{n} \sim_{\mathcal{M}} \triple{t}{k}{l} \leftrightharpoons \triple{s}{m}{n} \stackrel{\mathcal{M}}{\Longmapsto} \triple{t}{k}{l} \text{ and } \triple{t}{k}{l} \stackrel{\mathcal{M}}{\Longmapsto} \triple{s}{m}{n}.
\end{equation*}
Denote by $[s,m,n]$ the equivalence class of a configuration $\triple{s}{m}{n}$:
\begin{equation*}
  [s,m,n] = \{ \triple{t}{k}{l} \mid \triple{s}{m}{n} \sim_{\mathcal{M}} \triple{t}{k}{l} \}.
\end{equation*}
The set of all equivalence classes of relation $\sim_{\mathcal{M}}$ is denoted by $\mathcal{E}_{\mathcal{M}}$.

Let us define the relation $\stackrel{\mathcal{M}}{\Longmapsto}$ on the set of equivalence classes $\mathcal{E}_{\mathcal{M}}$:
\begin{equation*}
  [s,m,n] \stackrel{\mathcal{M}}{\Longmapsto} [t,k,l] \leftrightharpoons \triple{s}{m}{n} \stackrel{\mathcal{M}}{\Longmapsto} \triple{t}{k}{l}.
\end{equation*}
Greek letters $\alpha, \beta, \gamma$, etc., are used to denote equivalence classes. Denote by $\alpha_0$ the equivalence class of the initial configuration $\triple{s_0}{m_0}{n_0}$, i.e., $\alpha_0 = [s_0, m_0, n_0]$.

Now we define a Kripke frame $\mathfrak{F} = \< W, R \>$ as follows. Let
\begin{equation*}
      \bigcup\limits_{\substack{i \geq -5,\\ j \in \{0,1,2\}}} \{ a_i^j, b_i^j \} \cup
      \bigcup\limits_{\substack{\alpha \in \mathcal{E}_{\mathcal{M}}:\\ \alpha_0 \stackrel{\mathcal{M}}{\Longmapsto} \alpha}} \{e_{\alpha}\}.
\end{equation*}
To define the accessibility relation $R$ on $W$, we consider the following groups of relations:

\emph{Group $R_i^j$, $i \geq -4,\ j \in \{0,1,2\}$}:
\begin{align*}
  R_{-4}^j & = \left\{ \pair{a_{-4}^j}{a_{-5}^j},\ \pair{b_{-4}^j}{a_{-5}^j},\ \pair{b_{-4}^j}{b_{-5}^j} \right\}, \\
  R_{-3}^j & = \left\{ \pair{a_{-3}^j}{a_{-4}^j},\ \pair{a_{-3}^j}{b_{-5}^j},\ \pair{b_{-3}^j}{a_{-4}^j},\ \pair{b_{-3}^j}{b_{-4}^j} \right\}, \\
  R_i^0    & = \left\{ \pair{a_{i}^0}{a_{i-1}^0},\ \pair{a_{i}^0}{b_{i-2}^0},\ \pair{b_{i}^0}{a_{i-1}^0},\ \pair{b_{i}^0}{b_{i-1}^0} \right\} \;\ \text{ for all } i \geq -2 \text{ and } \\
  R_{-2}^1 & = \left\{ \pair{a_{-2}^1}{a_{-3}^1},\ \pair{a_{-2}^1}{b_{-4}^1},\ \pair{b_{-2}^1}{a_0^0},\ \pair{b_{-2}^1}{b_{-3}^1} \right\}, \\
  R_{-2}^2 & = \left\{ \pair{a_{-2}^2}{a_{-3}^2},\ \pair{a_{-2}^2}{b_{-4}^2},\ \pair{b_{-2}^2}{b_0^0},\ \pair{b_{-2}^2}{b_{-3}^2} \right\}, \\
  R_i^j    & = \left\{ \pair{a_i^j}{a_{i-1}^j},\ \pair{a_i^j}{b_{i-2}^j},\ \pair{b_i^j}{a_{i-2}^j},\ \pair{b_i^j}{b_{i-1}^j} \right\} \; \text{ for all } i \geq -1,\ j \in \{1,2\};
\end{align*}

\emph{Group $R_{s,m,n}$, $s,m,n \geq 0,\ \alpha_0 \stackrel{\mathcal{M}}{\Longmapsto} [s,m,n]$}:
\begin{align*}
  R_{s,m,n} = & \left\{ \pair{e_{[s,m,n]}}{a_{3s+1}^0}, \pair{e_{[s,m,n]}}{b_{3s+1}^0}, \pair{e_{[s,m,n]}}{a_m^1}, \right.\\
              & \;\; \left. \pair{e_{[s,m,n]}}{b_m^1}, \pair{e_{[s,m,n]}}{a_n^2}, \pair{e_{[s,m,n]}}{b_n^2} \right\}.
\end{align*}

\noindent Let
\begin{equation*}
  R' = \bigcup\limits_{\substack{i \geq -4,\\ j \in \{0,1,2\}}} R_i^j \cup
       \bigcup\limits_{\substack{s,m,n \geq 0:\\ \alpha_0 \stackrel{\mathcal{M}}{\Longmapsto} [s,m,n] }} R_{s,m,n} \cup
       \bigcup\limits_{\substack{\alpha, \beta \in \mathcal{E}_{\mathcal{M}}:\\ \alpha \stackrel{\mathcal{M}}{\Longmapsto} \beta}} \{ \pair{e_{\alpha}}{e_{\beta}} \}.
\end{equation*}
We take as $R$ the reflexive and transitive closure of $R'$.

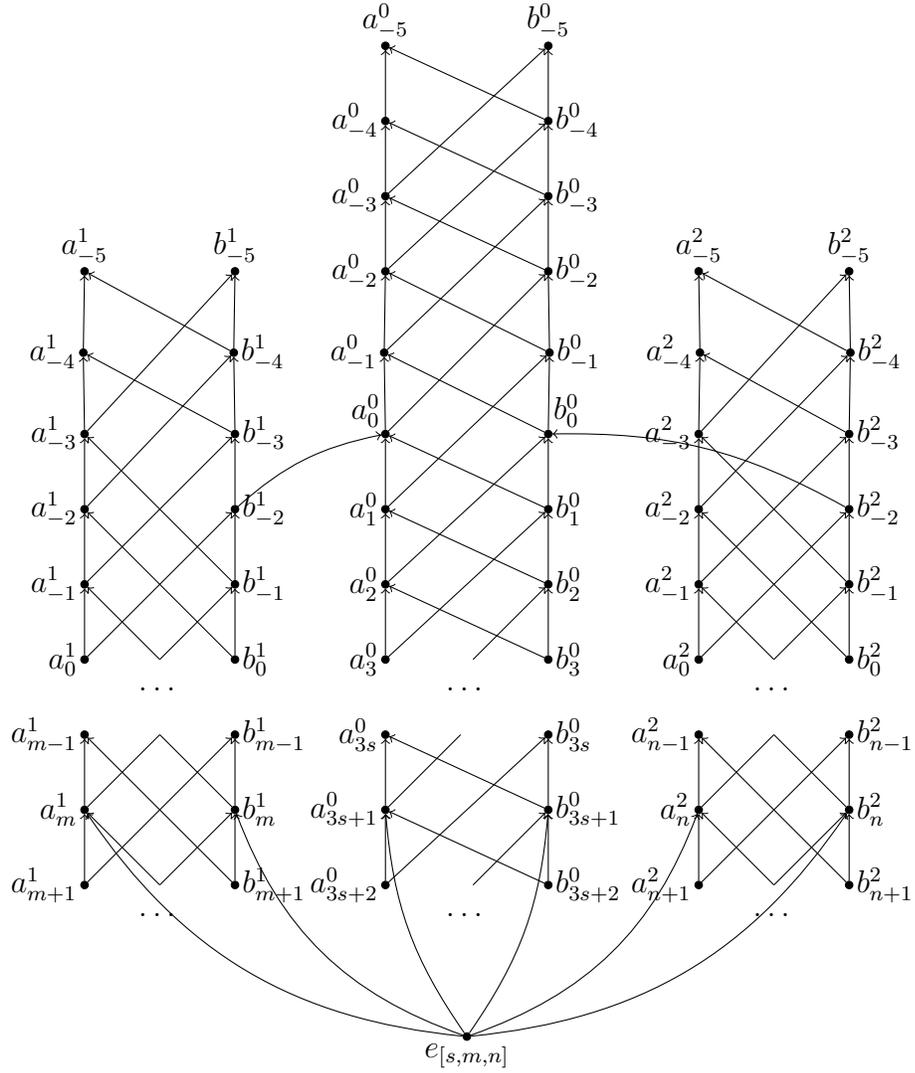
\begin{figure}

\centering

\begin{tikzpicture} [dot/.style={draw, circle, inner sep=1pt, fill}, tmp/.style={inner sep=1pt}, every label/.style={draw=none, inner sep=1pt, fill=none}]

\node[tmp] (top) {};
\node[tmp] (center) [below=4mm of top] {};

\node[dot] (a0m5) [label=90:  $a^0_{-5}$,  above left=4cm and 1cm of center] {};
\node[dot] (b0m5) [label=90:  $b^0_{-5}$,  above right=4cm and 1cm of center] {};
\node[dot] (a0m4) [label=180: $a^0_{-4}$,  above left=3cm and 1cm of center] {};
\node[dot] (b0m4) [label=0:   $b^0_{-4}$,  above right=3cm and 1cm of center] {};
\node[dot] (a0m3) [label=180: $a^0_{-3}$,  above left=2cm and 1cm of center] {};
\node[dot] (b0m3) [label=0:   $b^0_{-3}$,  above right=2cm and 1cm of center] {};
\node[dot] (a0m2) [label=180: $a^0_{-2}$,  above left=1cm and 1cm of center] {};
\node[dot] (b0m2) [label=0:   $b^0_{-2}$,  above right=1cm and 1cm of center] {};
\node[dot] (a0m1) [label=180: $a^0_{-1}$,  left=1cm of center] {};
\node[dot] (b0m1) [label=0:   $b^0_{-1}$,  right=1cm of center] {};
\node[dot] (a00)  [label=120: $a^0_{0}$,   below left=1cm and 1cm of center] {};
\node[dot] (b00)  [label=60:  $b^0_{0}$,   below right=1cm and 1cm of center] {};
\node[dot] (a01)  [label=180: $a^0_{1}$,   below left=2cm and 1cm of center] {};
\node[dot] (b01)  [label=0:   $b^0_{1}$,   below right=2cm and 1cm of center] {};
\node[dot] (a02)  [label=180: $a^0_{2}$,   below left=3cm and 1cm of center] {};
\node[dot] (b02)  [label=0:   $b^0_{2}$,   below right=3cm and 1cm of center] {};
\node[dot] (a03)  [label=180: $a^0_{3}$,   below left=4cm and 1cm of center] {};
\node[dot] (b03)  [label=0:   $b^0_{3}$,   below right=4cm and 1cm of center] {};
\node[tmp] (dot01)[label=90:  $\dots$,     below=45mm of center] {};
\node[dot] (a0sp) [label=180: $a^0_{3s}$, below left=5cm and 1cm of center] {};
\node[dot] (b0sp) [label=0:   $b^0_{3s}$, below right=5cm and 1cm of center] {};
\node[dot] (a0s)  [label=180: $a^0_{3s+1}$,   below left=6cm and 1cm of center] {};
\node[dot] (b0s)  [label=0:   $b^0_{3s+1}$,   below right=6cm and 1cm of center] {};
\node[dot] (a0ss) [label=180: $a^0_{3s+2}$, below left=7cm and 1cm of center] {};
\node[dot] (b0ss) [label=0:   $b^0_{3s+2}$, below right=7cm and 1cm of center] {};
\node[tmp] (dot02)[label=90:  $\dots$,     below=75mm of center] {};

\node[dot] (a1m5) [label=90:  $a^1_{-5}$,  above left=1cm and 5cm of center] {};
\node[dot] (b1m5) [label=90:  $b^1_{-5}$,  above left=1cm and 3cm of center] {};
\node[dot] (a1m4) [label=180: $a^1_{-4}$,  left=5cm of center] {};
\node[dot] (b1m4) [label=0:   $b^1_{-4}$,  left=3cm of center] {};
\node[dot] (a1m3) [label=180: $a^1_{-3}$,  below left=1cm and 5cm of center] {};
\node[dot] (b1m3) [label=0:   $b^1_{-3}$,  below left=1cm and 3cm of center] {};
\node[dot] (a1m2) [label=180: $a^1_{-2}$,  below left=2cm and 5cm of center] {};
\node[dot] (b1m2) [label=0:   $b^1_{-2}$,  below left=2cm and 3cm of center] {};
\node[dot] (a1m1) [label=180: $a^1_{-1}$,  below left=3cm and 5cm of center] {};
\node[dot] (b1m1) [label=0:   $b^1_{-1}$,  below left=3cm and 3cm of center] {};
\node[dot] (a10)  [label=180: $a^1_{0}$,   below left=4cm and 5cm of center] {};
\node[dot] (b10)  [label=0:   $b^1_{0}$,   below left=4cm and 3cm of center] {};
\node[tmp] (dot11)[label=90:  $\dots$,     below left=45mm and 4cm of center] {};
\node[dot] (a1mp) [label=180: $a^1_{m-1}$, below left=5cm and 5cm of center] {};
\node[dot] (b1mp) [label=0:   $b^1_{m-1}$, below left=5cm and 3cm of center] {};
\node[dot] (a1m)  [label=180: $a^1_{m}$,   below left=6cm and 5cm of center] {};
\node[dot] (b1m)  [label=0:   $b^1_{m}$,   below left=6cm and 3cm of center] {};
\node[dot] (a1ms) [label=180: $a^1_{m+1}$, below left=7cm and 5cm of center] {};
\node[dot] (b1ms) [label=0:   $b^1_{m+1}$, below left=7cm and 3cm of center] {};
\node[tmp] (dot12)[label=90:  $\dots$,     below left=75mm and 4cm of center] {};

\node[dot] (a2m5) [label=90:  $a^2_{-5}$,  above right=1cm and 3cm of center] {};
\node[dot] (b2m5) [label=90:  $b^2_{-5}$,  above right=1cm and 5cm of center] {};
\node[dot] (a2m4) [label=180: $a^2_{-4}$,  right=3cm of center] {};
\node[dot] (b2m4) [label=0:   $b^2_{-4}$,  right=5cm of center] {};
\node[dot] (a2m3) [label=180: $a^2_{-3}$,  below right=1cm and 3cm of center] {};
\node[dot] (b2m3) [label=0:   $b^2_{-3}$,  below right=1cm and 5cm of center] {};
\node[dot] (a2m2) [label=180: $a^2_{-2}$,  below right=2cm and 3cm of center] {};
\node[dot] (b2m2) [label=0:   $b^2_{-2}$,  below right=2cm and 5cm of center] {};
\node[dot] (a2m1) [label=180: $a^2_{-1}$,  below right=3cm and 3cm of center] {};
\node[dot] (b2m1) [label=0:   $b^2_{-1}$,  below right=3cm and 5cm of center] {};
\node[dot] (a20)  [label=180: $a^2_{0}$,   below right=4cm and 3cm of center] {};
\node[dot] (b20)  [label=0:   $b^2_{0}$,   below right=4cm and 5cm of center] {};
\node[tmp] (dot21)[label=90:  $\dots$,     below right=45mm and 4cm of center] {};
\node[dot] (a2np) [label=180: $a^2_{n-1}$, below right=5cm and 3cm of center] {};
\node[dot] (b2np) [label=0:   $b^2_{n-1}$, below right=5cm and 5cm of center] {};
\node[dot] (a2n)  [label=180: $a^2_{n}$,   below right=6cm and 3cm of center] {};
\node[dot] (b2n)  [label=0:   $b^2_{n}$,   below right=6cm and 5cm of center] {};
\node[dot] (a2ns) [label=180: $a^2_{n+1}$, below right=7cm and 3cm of center] {};
\node[dot] (b2ns) [label=0:   $b^2_{n+1}$, below right=7cm and 5cm of center] {};
\node[tmp] (dot22)[label=90:  $\dots$,     below right=75mm and 4cm of center] {};

\node[dot] (e)    [label=270: $e_{[s,m,n]}$,     below=9cm of center] {};

\draw [->] (a0m4) edge (a0m5);
\draw [->] (b0m4) edge (a0m5) edge (b0m5);
\draw [->] (a0m3)  edge (a0m4) edge (b0m5);
\draw [->] (b0m3)  edge (a0m4) edge (b0m4);
\draw [->] (a0m2)  edge (a0m3) edge (b0m4);
\draw [->] (b0m2)  edge (a0m3) edge (b0m3);
\draw [->] (a0m1)  edge (a0m2) edge (b0m3);
\draw [->] (b0m1)  edge (a0m2) edge (b0m2);
\draw [->] (a00)  edge (a0m1) edge (b0m2);
\draw [->] (b00)  edge (a0m1) edge (b0m1);
\draw [->] (a01)  edge (a00) edge (b0m1);
\draw [->] (b01)  edge (a00) edge (b00);
\draw [->] (a02)  edge (a01) edge (b00);
\draw [->] (b02)  edge (a01) edge (b01);
\draw [<-] (b02) -- ($(b02) + (-1,-1)$);
\draw [->] (a03)  edge (a02) edge (b01);
\draw [->] (b03)  edge (a02) edge (b02);
\draw [->] (a0s)  edge (a0sp);
\draw [->] (b0s)  edge (a0sp) edge (b0sp);
\draw [-]  (a0s) -- ($(a0s) + (1,1)$);
\draw [<-] (b0s) -- ($(b0s) + (-1,-1)$);
\draw [->] (a0ss) edge (a0s) edge (b0sp);
\draw [->] (b0ss) edge (a0s) edge (b0s);

\draw [->] (a1m4) edge (a1m5);
\draw [->] (b1m4) edge (a1m5) edge (b1m5);
\draw [->] (a1m3) edge (a1m4) edge (b1m5);
\draw [->] (b1m3) edge (a1m4) edge (b1m4);
\draw [->] (a1m2) edge (a1m3) edge (b1m4);
\draw [->] (b1m2) edge [bend left=15] (a00) edge (b1m3);
\draw [->] (a1m1) edge (a1m2) edge (b1m3);
\draw [->] (b1m1) edge (a1m3) edge (b1m2);
\draw [<-] (a1m1) -- ($(a1m1) + (1,-1)$);
\draw [<-] (b1m1) -- ($(b1m1) + (-1,-1)$);
\draw [->] (a10)  edge (a1m1) edge (b1m2);
\draw [->] (b10)  edge (a1m2) edge (b1m1);
\draw [->] (a1m)  edge (a1mp);
\draw [->] (b1m)  edge (b1mp);
\draw [-]  (a1m) -- ($(a1m) + (1,1)$);
\draw [-]  (b1m) -- ($(b1m) + (-1,1)$);
\draw [<-] (a1m) -- ($(a1m) + (1,-1)$);
\draw [<-] (b1m) -- ($(b1m) + (-1,-1)$);
\draw [->] (a1ms) edge (a1m)  edge (b1mp);
\draw [->] (b1ms) edge (a1mp) edge (b1m);

\draw [->] (a2m4) edge (a2m5);
\draw [->] (b2m4) edge (a2m5) edge (b2m5);
\draw [->] (a2m3) edge (a2m4) edge (b2m5);
\draw [->] (b2m3) edge (a2m4) edge (b2m4);
\draw [->] (a2m2) edge (a2m3) edge (b2m4);
\draw [->] (b2m2) edge [bend right=15] (b00) edge (b2m3);
\draw [->] (a2m1) edge (a2m2) edge (b2m3);
\draw [->] (b2m1) edge (a2m3) edge (b2m2);
\draw [<-] (a2m1) -- ($(a2m1) + (1,-1)$);
\draw [<-] (b2m1) -- ($(b2m1) + (-1,-1)$);
\draw [->] (a20)  edge (a2m1) edge (b2m2);
\draw [->] (b20)  edge (a2m2) edge (b2m1);
\draw [->] (a2n)  edge (a2np);
\draw [->] (b2n)  edge (b2np);
\draw [-]  (a2n) -- ($(a2n) + (1,1)$);
\draw [-]  (b2n) -- ($(b2n) + (-1,1)$);
\draw [<-] (a2n) -- ($(a2n) + (1,-1)$);
\draw [<-] (b2n) -- ($(b2n) + (-1,-1)$);
\draw [->] (a2ns) edge (a2n)  edge (b2np);
\draw [->] (b2ns) edge (a2np) edge (b2n);

\draw [->] (e) edge [bend left=25] (a1m) edge [bend left=25] (b1m) edge [bend left=15] (a0s) edge [bend right=15] (b0s) edge [bend right=25] (a2n) edge [bend right=25] (b2n);

\end{tikzpicture}

\caption{Kripke model $\mathfrak{M}$.} \label{model}

\end{figure}

Let us define a valuation $\mathfrak{V}$ of the Kripke model $\mathfrak{M} = \< \mathfrak{F}, \mathfrak{V} \>$ in the following way:
\begin{equation*}
\begin{aligned}
  (\mathfrak{M}, w) \not\models r \quad \Longleftrightarrow \quad & w \leq_R a_{-4}^0 & \text{or } &\ w \leq_R b_{-5}^0; \\
  (\mathfrak{M}, w) \not\models p \quad \Longleftrightarrow \quad & w \leq_R a_{-4}^1 & \text{or } &\ w \leq_R b_{-5}^1; \\
  (\mathfrak{M}, w) \not\models q \quad \Longleftrightarrow \quad & w \leq_R a_{-4}^2 & \text{or } &\ w \leq_R b_{-5}^2.
\end{aligned}
\end{equation*}
The model $\mathfrak{M}$ is depicted on Figure~\ref{model}. Now we prove some basic semantic properties of the Kripke model $\mathfrak{M}$.

\begin{lemma} \label{L:Semantic}
Let $w$ be a world of $\mathfrak{M}$, then
\begin{align*}
  w \not\models A_i^j \quad \Longleftrightarrow \quad & w \leq_R a_i^j, \\
  w \not\models B_i^j \quad \Longleftrightarrow \quad & w \leq_R b_i^j
\end{align*}
for all $i \geq -4$ and $j \in \{0,1,2\}$.
\end{lemma}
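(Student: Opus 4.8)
The plan is to prove both equivalences simultaneously by induction on $i$, treating the families $j=0$, then $j=1$, then $j=2$ in that order, so that when the formulas of the groups $(A^1),(B^1),(A^2),(B^2)$ that mention $C_1=A^0_0$ and $C_2=B^0_0$ are reached, the behaviour of $C_1$ and $C_2$ is already known. Two preliminary observations carry the whole argument. First, by the definition of $\mathfrak{V}$ the terminal world $b^0_{-5}$ is the \emph{only} world of $\mathfrak{M}$ at which $\neg r$ is true: every other world sees a terminal world (one of $a^0_{-5}$, $a^1_{-5}$, $b^1_{-5}$, $a^2_{-5}$, $b^2_{-5}$) that validates $r$; hence $w\not\models B^0_{-5}=\neg\neg r$ iff $w\leq_R b^0_{-5}$, and similarly for $p,q$ with $b^1_{-5},b^2_{-5}$. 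Second, a routine combinatorial reading of the groups $R^j_i$ and $R_{s,m,n}$ describes, for any two worlds, whether one sees the other; in particular the ``$(\cdot)^0$ ladder'' is reached from the other two ladders only through the cross-links $\pair{b^1_{-2}}{a^0_0}$ and $\pair{b^2_{-2}}{b^0_0}$, and from each $e_{[s,m,n]}$ through $\pair{e_{[s,m,n]}}{a^0_{3s+1}}$ and $\pair{e_{[s,m,n]}}{b^0_{3s+1}}$.

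For the base $i=-4$ I would argue directly. Since $A^0_{-4}=\neg\neg r\to r$ and the only successors of $a^0_{-4}$ are $a^0_{-4}$ itself and the terminal $a^0_{-5}\models r$, the world $a^0_{-4}$ validates $\neg\neg r$ and refutes $r$, so it refutes $A^0_{-4}$; conversely, if $w\not\models A^0_{-4}$ then $w$ sees a world validating $\neg\neg r$ and refuting $r$, and by the valuation together with the first observation such a world lies below $a^0_{-4}$ — it cannot lie below $b^0_{-5}$, for then it would validate $\neg r$. Likewise for $B^0_{-4}=A^0_{-4}\to(\neg r\vee\neg\neg r)$: using the case just proved and the first observation one checks that among all worlds of $\mathfrak{M}$ exactly $b^0_{-4}$ validates $A^0_{-4}$ while refuting both $\neg r$ and $\neg\neg r$, which identifies the refutation set of $B^0_{-4}$ with $\{w:w\leq_R b^0_{-4}\}$. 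The families $j=1,2$ are identical, with $p$ resp.\ $q$ in place of $r$.

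For the inductive step I would use the recursions $S_i[x]=T_{i-1}[x]\to(S_{i-1}[x]\vee T_{i-2}[x])$ and $T_i[x]=S_i[x]\to(S_{i-1}[x]\vee T_{i-1}[x])$, and the twisted analogues built into the groups $(A^1),(B^1),(A^2),(B^2)$, to write each $A^j_i$ and $B^j_i$ in the shape $G\to(H_1\vee\cdots\vee H_m)$, where $G$ is a formula, or a conjunction of formulas, of strictly smaller index whose refutation set is already known, and each $H_l$ is a formula whose refutation set is $\{w:w\leq_R h_l\}$. The generic fact is then: if the out-edges of a world $f$ go precisely to $h_1,\dots,h_m$, if $f$ does not see the refutation world(s) of the conjuncts of $G$, and if every world that sees all of $h_1,\dots,h_m$ but none of those conjunct-worlds also sees $f$, then the refutation set of $G\to(H_1\vee\cdots\vee H_m)$ is exactly $\{w:w\leq_R f\}$. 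Taking $f=a^j_i$ (so $G$ is the guard $B^j_{i-1}$, or $C_2\wedge B^j_{i-1}$ resp.\ $C_1\wedge B^j_{i-1}$ for $j=1$ resp.\ $j=2$, and the $H$'s are $A^j_{i-1},B^j_{i-2}$ together with $C_1$ for $j=1$ or $C_2$ for $j=2$) and $f=b^j_i$, and reading off the edge lists, yields the step. The cases $i=-2,-1$ of the families $j=1,2$ go the same way, from the special two-disjunct definitions and the special edges of $R^1_{-2},R^2_{-2}$, where the edge to $a^j_{-3}$ is replaced by the one to $a^0_0$ resp.\ $b^0_0$, exactly mirroring the replacement of $A^j_{-3}$ by $C_1$ resp.\ $C_2$ in $B^j_{-2}$. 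One also records that $a^1_i,b^1_i$ for $i\geq 0$ see $a^0_0$ but not $b^0_0$ (and symmetrically $a^2_i,b^2_i$ see $b^0_0$ but not $a^0_0$), so that $C_1$ is automatically refuted and $C_2$ automatically validated at the worlds of the first family, which is what makes the ``$C_1\vee$'' and ``$C_2\wedge$'' decorations harmless here.

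The main obstacle I expect is the third clause of the generic fact: that the worlds seeing all of $h_1,\dots,h_m$ but none of the guard-worlds are precisely the worlds below $f$. This is where the concrete geometry of $\mathfrak{M}$ enters. One must check that the out-edges $\pair{a^j_i}{a^j_{i-1}},\pair{a^j_i}{b^j_{i-2}}$ (and the $b^j_i$ analogues) really force this; that no spurious witness appears higher up one of the ladders — which is exactly why the guard-world $b^j_{i-1}$ (resp.\ $a^j_i$), deliberately \emph{not} seen from $f$, is placed where it is; and, the subtlest point, that the cross-links $\pair{b^1_{-2}}{a^0_0}$, $\pair{b^2_{-2}}{b^0_0}$ and the worlds $e_{[s,m,n]}$, which see the whole lower part of the $(\cdot)^0$ ladder, create no unexpected witnesses for the $j=0$ step — here one observes that any such world seeing the relevant $a^0_{i-1}$ and $b^0_{i-2}$ also sees the guard-world $b^0_{i-1}$, and so is automatically excluded.
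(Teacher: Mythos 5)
Your proposal is correct and follows essentially the same route as the paper: induction on the index $i$, showing in each case that the designated point $a_i^j$ (resp.\ $b_i^j$) is the unique maximal world validating the antecedent while refuting every disjunct of the consequent, with separate treatment of the base cases, the special clauses at $i=-2,-1$ for $j\in\{1,2\}$, and the cross-links to $a_0^0$, $b_0^0$ and the points $e_{[s,m,n]}$. Your packaging of the recurring step as a single ``generic fact'' about formulas $G\to(H_1\vee\cdots\vee H_m)$, and your explicit ordering of the families ($j=0$ before $j=1,2$ so that the refutation sets of $C_1$ and $C_2$ are available when first needed), is a slightly cleaner organization of the same argument.
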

\begin{proof}
By induction on $i \geq -4$.

\noindent
\textbf{Induction base} consists of the following cases:

1) $i = -4$. Let $x_0 = r$, $x_1 = p$, and $x_2 = q$.

Since $w \not\models x_j$ iff $w \leq_R a_{-4}^j$ or $w \leq_R b_{-5}^j$, we have that $A_{-4}^j$ is refuted at $a_{-4}^j$ and $B_{-4}^j$ is refuted at $b_{-4}^j$. Therefore, $w \not\models A_{-4}^j$ if $w \leq_R a_{-4}^j$ and $w \not\models B_{-4}^j$ if $w \leq_R b_{-4}^j$.

If $w \not\models A_{-4}^j$, then there exists a point $w' \geq_R w$ such that $w' \models \neg \neg x_j$ and $w' \not\models x_j$. By definition of the valuation $\mathfrak{V}$, we have either $w' \leq_R a_{-4}^j$ or $w' \leq_R b_{-5}^j$. Since $w' \models \neg \neg x_j$, therefore for all point $w'' \geq_R w'$ there is a point $w''' \geq_R w''$ such that $w''' \models x_j$. It is clear that $w' \nleq_R b_{-5}^j$. Hence, $w \leq_R a_{-4}^j$.

If $w \not\models B_{-4}^j$, then there exist points $w' \geq_R w$ and $w'' \geq_R w$ such that $w' \models x_j$ and $w'' \models \neg x_j$. By definition of the valuation $\mathfrak{V}$, we have $w' \nleq_R a_{-4}^j$, $w' \nleq_R b_{-5}^j$, and $w'' = b_{-5}^j$. If $w' \leq_R a_{-5}^{j'}$ or $w' \leq_R b_{-5}^{j'}$ for some $j' \in \{0,1,2\} \setminus \{j\}$, then $w \leq_R e_{[s,m,n]}$ for some $s,m,n \geq 0$ and therefore $w \leq_R b_{-4}^j$. Otherwise, $w' = a_{-5}^j$. Note that there is a unique point $w''' \geq_R w$ such that $w''' \leq_R a_{-5}^j$, $w''' \leq_R b_{-5}^j$, and $w''' \models A_{-4}^j$. It is easily seen that $w''' = b_{-4}^j$ and therefore $w \leq_R b_{-4}^j$.

2) $i = -3$.

Note that $w \not\models A_{-3}^j$ if $w \leq_R a_{-4}^j$, $w \leq_R b_{-5}^j$, $w \nleq_R b_{-4}^j$ and $w \not\models B_{-3}^j$ if $w \leq_R a_{-4}^j$, $w \leq_R b_{-4}^j$, $w \nleq_R a_{-3}^j$. Since $a_{-3}^j$ and $b_{-3}^j$ are unique maximal points satisfying this condition, we have that $w \not\models A_{-3}^j$ if $w \leq_R a_{-3}^j$ and $w \not\models B_{-3}^j$ if $w \leq_R b_{-3}^j$.

If $w \not\models A_{-3}^j$, then there exists a point $w' \geq_R w$ such that $w' \not\models A_{-4}^j,\ \neg \neg x_j$ and $w' \models B_{-4}^j$. So, $w' \leq_R a_{-4}^j$ and $w' \nleq_R b_{-4}^j$. Since $w' \not\models \neg \neg x_j$, there is a point $w'' \geq_R w'$ such that $w'' \models \neg x_j$. It is clear that $w'' = b_{-5}^j$. Evidently, the model $\mathfrak{M}$ contains only one point $a_{-3}^j$ satisfying the following condition: $w' \leq_R a_{-4}^j$, $w' \leq_R b_{-5}^j$ and $w' \nleq_R b_{-4}^j$. Hence, $w \leq_R a_{-3}^j$.

If $w \not\models B_{-3}^j$, then there exists a point $w' \geq_R w$ such that $w' \not\models A_{-4}^j,\ B_{-4}^j$ and $w' \models A_{-3}^j$. Then $w' \leq_R a_{-4}^j$, $w' \leq_R b_{-4}^j$ and $w' \nleq_R a_{-3}^j$. Evidently, the model $\mathfrak{M}$ contains only one point $b_{-3}^j$ satisfying this condition. Therefore, $w \leq_R b_{-3}^j$.

3) $i = -2$ and $j \in \{1,2\}$.

We have that $w \not\models A_{-2}^j$ if $w \leq_R a_{-3}^j$, $w \leq_R b_{-4}^j$, $w \nleq_R b_{-3}^j$ and $w \not\models B_{-2}^j$ if $w \leq_R c$, $w \leq_R b_{-3}^j$, $w \nleq_R a_{-2}^j$, where $c = a_0^0$ for $j = 1$ and $c = b_0^0$ for $j = 2$. Since $a_{-2}^j$ and $b_{-2}^j$ are unique maximal points satisfying this condition, we have that $w \not\models A_{-2}^j$ if $w \leq_R a_{-2}^j$ and $w \not\models B_{-2}^j$ if $w \leq_R b_{-2}^j$.

If $w \not\models A_{-2}^j$, then there exists a point $w' \geq_R w$ such that $w' \not\models A_{-3}^j,\ B_{-4}^j$ and $w' \models B_{-3}^j$. So, $w' \leq_R a_{-3}^j$, $w' \leq_R b_{-4}^j$, and $w' \nleq_R b_{-3}^j$. It is clear that the model $\mathfrak{M}$ contains only one point $a_{-2}^j$ satisfying this condition. Hence, $w \leq_R a_{-2}^j$.

If $w \not\models B_{-2}^j$, then there exists a point $w' \geq_R w$ such that $w' \not\models C_j,\ B_{-3}^j$ and $w' \models A_{-3}^j$. Then $w' \leq_R c$, $w' \leq_R b_{-3}^j$ and $w' \nleq_R a_{-3}^j$, where $c = a_0^0$ if $j = 1$ and $c = b_0^0$ if $j = 2$. Evidently, the model $\mathfrak{M}$ contains only one point $b_{-2}^j$ satisfying this condition. Therefore, $w \leq_R b_{-2}^j$.

4) $i = -1$ and $j \in \{1,2\}$. This case easily follows by analogy.

\noindent
\textbf{Induction step:} assume that $i \geq -2$ if $j = 0$ and $i \geq 0$ if $j \in \{1,2\}$. Without loss of generality, we can consider the case $j = 1$. The cases $j=0$ and $j=2$ are proved by analogy.

By induction assumption, we have that $w \not\models A_i^1$ if $w \leq_R a_0^0, a_{i-1}^1, b_{i-2}^1$, $w \nleq_R b_0^0, b_{i-1}^1$ and $w \not\models B_i^1$ if $w \leq_R a_0^0, a_{i-2}^1, b_{i-1}^1$, $w \nleq_R b_0^0, a_{i-1}^1$. Since $a_i^1$ and $b_i^1$ are unique maximal points satisfying this condition, we have that $w \not\models A_i^1$ if $w \leq_R a_i^1$ and $w \not\models B_i^1$ if $w \leq_R b_i^1$.

If $w \not\models A_i^1$, then there exists a point $w' \geq_R w$ such that $w' \not\models C_1,\ A_{i-1}^1,\ B_{i-2}^1$ and $w' \models C_2, B_{i-1}^1$. By induction hypothesis, we obtain that $w' \leq_R a_{i-1}^1$, $w' \leq_R b_{i-2}^1$, and $w' \nleq_R b_{i-1}^1$. So, $w' = a_i^1$ and $w \leq_R a_i^1$ by definition of the accessibility relation $R$. Analogously, if $w \not\models B_i^1$, then $w \leq_R b_i^1$. The lemma is proved.
\end{proof}

\begin{lemma}
Let $w$ be a world of $\mathfrak{M}$, then
\begin{equation*}
  w \not\models E_{s,m,n} \quad \Longleftrightarrow \quad w \leq_R e_{[s,m,n]}
\end{equation*}
for all $s,m,n \geq 0$ such that $\alpha_0 \stackrel{\mathcal{M}}{\Longmapsto} [s,m,n]$.
\end{lemma}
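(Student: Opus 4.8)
The plan is to argue exactly as in Lemma~\ref{L:Semantic}, reducing the truth of $E_{s,m,n}$ at a world $w$ to order relations in $\mathfrak{F}$ by unfolding the intuitionistic semantics of the connectives occurring in $E_{s,m,n}$ and then invoking Lemma~\ref{L:Semantic} for each of the six subformulas $A^0_{3s+2}$, $B^0_{3s+2}$, $A^1_{m+1}$, $B^1_{m+1}$, $A^2_{n+1}$, $B^2_{n+1}$ on the left of the implication and $A^0_{3s+1}$, $B^0_{3s+1}$, $A^1_{m}$, $B^1_{m}$, $A^2_{n}$, $B^2_{n}$ on the right. First I would establish the direction ($\Leftarrow$): if $w \leq_R e_{[s,m,n]}$, then it suffices to show $e_{[s,m,n]} \not\models E_{s,m,n}$, since refutation is inherited downward along $R$. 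At $e_{[s,m,n]}$ one checks using $R_{s,m,n}$ that $e_{[s,m,n]} \leq_R a^0_{3s+1}, b^0_{3s+1}, a^1_m, b^1_m, a^2_n, b^2_n$, so by Lemma~\ref{L:Semantic} all six disjuncts on the right of $E_{s,m,n}$ are refuted at $e_{[s,m,n]}$; it remains to verify that each of the six conjuncts on the left is \emph{true} at $e_{[s,m,n]}$, i.e.\ that $e_{[s,m,n]} \nleq_R a^0_{3s+2}, b^0_{3s+2}, a^1_{m+1}, b^1_{m+1}, a^2_{n+1}, b^2_{n+1}$, which follows from the construction of $R'$ (the only points above $e_{[s,m,n]}$ are other $e_\beta$'s and the points $a,b$ listed in $R_{s,m,n}$ together with everything $R$-above them, and by the shape of the groups $R^j_i$ the index strictly decreases moving upward, so $a^0_{3s+2}$ etc.\ are never reached).

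For the direction ($\Rightarrow$), suppose $w \not\models E_{s,m,n}$. Then there is a point $w' \geq_R w$ satisfying all six conjuncts of the antecedent and refuting all six disjuncts of the consequent. By Lemma~\ref{L:Semantic} this means $w' \leq_R a^0_{3s+1}, b^0_{3s+1}, a^1_m, b^1_m, a^2_n, b^2_n$ while $w' \nleq_R a^0_{3s+2}, b^0_{3s+2}, a^1_{m+1}, b^1_{m+1}, a^2_{n+1}, b^2_{n+1}$. The key step is now a purely order-theoretic claim about $\mathfrak{F}$: the only world lying $R$-below all of $a^0_{3s+1}, b^0_{3s+1}, a^1_m, b^1_m, a^2_n, b^2_n$ simultaneously, yet not below $a^1_{m+1}$ (say), is $e_{[s,m,n]}$ itself. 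This is where I expect the main work to lie. One inspects the frame: the worlds $R$-below a given $a^j_i$ or $b^j_i$ are, by the groups $R^j_i$, exactly the $a$'s and $b$'s of the same ``column'' $j$ with larger index, together with (when the zig-zag crosses over via $R^1_{-2}, R^2_{-2}$) some column-$0$ points, together with any $e_\beta$ whose $R_{s',m',n'}$ block points into them. Requiring simultaneous descent below all six of $a^0_{3s+1}, b^0_{3s+1}, a^1_m, b^1_m, a^2_n, b^2_n$ forces $w'$ to be one of the $e_\beta$'s (a single $a$ or $b$ cannot sit below points in all three columns at once), and then the block $R_{s',m',n'}$ of that $e_\beta$ must point to exactly these six points, which by the definition of $R_{s,m,n}$ and the injectivity of the coordinates $(3s+1, m, n) \mapsto (s,m,n)$ pins down $[s',m',n'] = [s,m,n]$; the condition $w' \nleq_R a^1_{m+1}$ rules out the degenerate possibility that $w'$ lies strictly above $e_{[s,m,n]}$ in a way that would also put it below $a^1_{m+1}$. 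Hence $w' \leq_R e_{[s,m,n]}$, and since $w \leq_R w'$ we get $w \leq_R e_{[s,m,n]}$, as required.

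The main obstacle, then, is the frame-combinatorial lemma of the previous paragraph: showing that the six ``lower bounds'' $a^0_{3s+1}, b^0_{3s+1}, a^1_m, b^1_m, a^2_n, b^2_n$, read off from Lemma~\ref{L:Semantic}, together with one ``non-bound'' constraint, uniquely identify $e_{[s,m,n]}$ among all worlds of $\mathfrak{M}$. I would handle it by a careful but routine case analysis on which type of world $w'$ can be, using that each $R^j_i$ block moves strictly toward smaller index within its column and that the only ``mixing'' between the three columns happens through the designated cross-edges in $R^1_{-2}$, $R^2_{-2}$ and through the $e_\beta$ blocks, so that no non-$e$ world can be below points of all three columns simultaneously. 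Everything else — the unfolding of $\wedge$, $\vee$, $\to$ in $E_{s,m,n}$ and the appeals to Lemma~\ref{L:Semantic} — is bookkeeping of the same flavour as the induction step already carried out.
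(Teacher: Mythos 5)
Your overall strategy is the one the paper intends: the paper dismisses this lemma as ``trivial by definition of the accessibility relation $R$,'' leaning on Lemma~\ref{L:Semantic} exactly as you do, and you are right that the only real content is the frame-combinatorial claim. However, the justification you give for that claim has a concrete hole, in both directions, at the same spot: you treat the points $R$-above $e_{[s,m,n]}$ as if they were only the six targets of $R_{s,m,n}$ together with what lies above them, where ``the index strictly decreases moving upward.'' That ignores the third constituent of $R'$: we also have $e_{[s,m,n]} \leq_R e_{[s',m',n']}$ for every class with $[s,m,n] \stackrel{\mathcal{M}}{\Longmapsto} [s',m',n']$, and each such $e_{[s',m',n']}$ re-enters the columns at levels $3s'+1$, $m'$, $n'$ determined by the \emph{later} configuration. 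These levels need not be smaller than $3s+1$, $m$, $n$: a Minsky machine may increase its state or its counters along a run. For instance, if some configuration with state $s' > s$ is reachable from $\triple{s}{m}{n}$, then $e_{[s,m,n]} \leq_R e_{[s',m',n']} \leq_R a_{3s'+1}^0 \leq_R a_{3s+2}^0$, and your argument that the antecedent conjunct $A_{3s+2}^0$ is true at $e_{[s,m,n]}$ collapses. The same omission affects your ($\Rightarrow$) direction: an $e_\beta$ can lie below $a_{3s+1}^0, b_{3s+1}^0, a_m^1, b_m^1, a_n^2, b_n^2$ without its own block $R_{s',m',n'}$ pointing at those six points, because it can reach them through later $e_\gamma$'s; so ``the block must point to exactly these six points'' does not follow, all six non-bound constraints (not just one) have to be invoked, and even then they only control which states and counter values occur somewhere among the configurations reachable from $\beta$ --- possibly in three different configurations, which is not yet $[s,m,n] $ itself.

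So the missing idea is an analysis of the whole forward-reachability cone $\{\gamma : \beta \stackrel{\mathcal{M}}{\Longmapsto} \gamma\}$ rather than a local inspection of a single $R$-block. Without it, the two claims you call routine --- that $e_{[s,m,n]} \nleq_R a_{3s+2}^0$ (and its five companions), and that the six lower bounds plus the non-bounds uniquely identify $e_{[s,m,n]}$ --- are not established, and the first of them is, as you state it, false for machines whose state number ever increases. Since the paper's own one-line proof does not address this either, it is a point worth raising with the author rather than something you can patch by ``careful but routine case analysis'' on the columns alone.
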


The proof is trivial by definition of the accessibility relation $R$. Finally, we prove the key lemma of this section.

\begin{lemma} \label{L:Semantic:2}
If $(\mathfrak{F},w) \not\models E_{s,m,n}$, then $e_{[s,m,n]} \in W$ and $w \leq_R e_{[s,m,n]}$ for all $s,m,n \geq 0$.
\end{lemma}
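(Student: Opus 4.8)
The plan is to reason directly from a refuting model. Suppose $\mathfrak{M}' = \langle \mathfrak{F}, \mathfrak{V}' \rangle$ is a model on our frame $\mathfrak{F}$ and $w \in W$ with $(\mathfrak{M}', w) \not\models E_{s,m,n}$. Unfolding the truth clause for $\to$, there is a point $w' \geq_R w$ at which the six antecedent conjuncts $A_{3s+2}^0, B_{3s+2}^0, A_{m+1}^1, B_{m+1}^1, A_{n+1}^2, B_{n+1}^2$ are all true and the six succedent disjuncts $A_{3s+1}^0, B_{3s+1}^0, A_m^1, B_m^1, A_n^2, B_n^2$ are all false. Since $w \leq_R w'$ and $R$ is transitive, it suffices to prove that $e_{[s,m,n]} \in W$ and $w' \leq_R e_{[s,m,n]}$; the conclusion $w \leq_R e_{[s,m,n]}$ then follows at once.

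The core of the argument is a structural claim that strengthens Lemma~\ref{L:Semantic} from the canonical valuation $\mathfrak{V}$ to an \emph{arbitrary} valuation on $\mathfrak{F}$. I would establish, by induction on the index $i$ (following the induction in the proof of Lemma~\ref{L:Semantic}, with the same irregular base cases $i = -4, -3, -2, -1$ and the hooks of legs $1,2$ into the junction points $a_0^0, b_0^0$), a statement roughly of the form: for every valuation and every $v \in W$, if $v \not\models A_i^j$ while $v \models A_{i+1}^j \wedge B_{i+1}^j$, then $v \leq_R a_i^j$, and symmetrically for $B_i^j$ and $b_i^j$. The justification is that the formulas $A_i^j, B_i^j$ are variants of the Rieger--Nishimura formulas in the single variable $x_j \in \{r, p, q\}$ laid out along the ``diamond chain'' that forms leg $j$: refuting $A_i^j$ forces the cone above $v$ to contain a substructure of the appropriate height in that leg, while the true conjunct $A_{i+1}^j \wedge B_{i+1}^j$ forbids it from reaching one step further, and in $\mathfrak{F}$ the only worlds whose cone realizes exactly ``level $i$ of leg $j$'' are the ones below $a_i^j$ or $b_i^j$ (note that each $e$-point that reaches level $i$ of leg $j$ is itself below $a_i^j$, so this case is automatically covered).

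Granting the structural claim, I apply it three times — to leg $0$ at level $3s+1$, leg $1$ at level $m$, and leg $2$ at level $n$ — obtaining $w' \leq_R a_{3s+1}^0, b_{3s+1}^0, a_m^1, b_m^1, a_n^2, b_n^2$. Since in $\mathfrak{F}$ the three legs meet only through $a_0^0, b_0^0$ and the $e$-points, no world lying above the junction can see all three legs at positive depth, so $w'$ must be an $e$-point, say $w' = e_\alpha$. Chasing the definition of $R$ through the groups $R_{t,k,l}$, an $e$-point that simultaneously sees $a_{3s+1}^0$, $a_m^1$ and $a_n^2$ can do so only if $\alpha \stackrel{\mathcal{M}}{\Longmapsto} [s,m,n]$, whence $e_{[s,m,n]} \in W$ and $w' \leq_R e_{[s,m,n]}$, as required. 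The main obstacle is precisely the strengthened structural claim: upgrading ``$a_i^j$ refutes $A_i^j$ under $\mathfrak{V}$'' to ``$A_i^j$ can be refuted only at worlds below $a_i^j$, under every interpretation of $r, p, q$'' is where the exact design of the diamond-chain frame and of the indices in the groups $(A)$, $(B)$, $(E)$ is genuinely needed, and the low-index base cases together with the interaction of the three legs at the $e$-points are the technically delicate spots.
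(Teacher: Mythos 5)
Your skeleton matches the paper's: pass to the witness $w'\geq_R w$, show that $w'$ lies below all of $a_{3s+1}^0, b_{3s+1}^0, a_m^1, b_m^1, a_n^2, b_n^2$ and below none of the next-level points, and conclude that $w'$ (hence $w$) lies below the unique maximal point $e_{[s,m,n]}$ with that profile. But the argument has a genuine gap exactly where you park the difficulty: the ``strengthened structural claim'' is never proved, and as you state it --- per leg, per formula, justified by ``the only worlds whose cone realizes exactly level $i$ of leg $j$ are the ones below $a_i^j$ or $b_i^j$'' --- it is false. The formulas $A_i^0, B_i^0$ are one-variable formulas in $r$, and all three legs of $\mathfrak{F}$ are ladders of essentially the same local shape; an adversarial valuation can relocate the $r$-ladder into leg $1$ or leg $2$ (for instance, take $\mathfrak{V}'(r)$ to be the canonical extension of $p$). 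Under such a valuation $A_{3s+1}^0$ is refuted at a point of leg $1$ (around level $4s+2$) that is not below $a_{3s+1}^0$, while $A_{3s+2}^0\wedge B_{3s+2}^0$ is still true there, so the per-formula claim fails. Frame geometry applied leg by leg does not rule this out; that the valuation must behave canonically is precisely what has to be proved.

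What does rule it out --- and what constitutes the bulk of the paper's proof --- is a valuation-forcing argument: the six conditions at $w'$ pin down $\mathfrak{V}'$ on the tops of the legs, namely $a_{-5}^0\in\mathfrak{V}'(r)$ and $a_{-4}^0, b_{-5}^0\notin\mathfrak{V}'(r)$, and similarly for $p$ on leg $1$ and $q$ on leg $2$. The key device is the deliberate asymmetry between $R_i^0$ (where $b_i^0$ sees $b_{i-1}^0$ and $a_{i-1}^0$) and $R_i^j$ for $j\in\{1,2\}$ (where $b_i^j$ sees $a_{i-2}^j$): because of it, the one-variable ladder climbs leg $0$ in steps matching the $3s+1$ indexing but climbs legs $1,2$ at the mismatched rate $4s+2$, $4s+3$. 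Hence, if the $r$-ladder were realized in a wrong leg, any point seeing both the $A_{3s+1}^0$- and the $B_{3s+1}^0$-witnesses would already refute $A_{3s+2}^0$, contradicting the truth of the antecedent of $E_{s,m,n}$ at $w'$. Note that this contradiction needs the $A$- and $B$-refutations simultaneously, so the correct structural claim must be formulated for the pair $(A_i^j, B_i^j)$, not for $A_i^j$ alone. Without this index-mismatch analysis your induction has nothing to stand on, so the proposal, while aimed in the right direction, omits the central idea of the proof.
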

\begin{proof}
Let $\mathfrak{M}' = \<\mathfrak{F},\mathfrak{V}'\>$ be a Kripke model such that $(\mathfrak{M}',w) \not\models E_{s,m,n}$. Since $w \not\models E_{s,m,n}$, there is a point $w' \geq_R w$ such that the formulas $A_{3s+1}^0$, $B_{3s+1}^0$, $A_m^1$, $B_m^1$, $A_n^2$, $B_n^2$ are refuted at $w'$, and the formulas $A_{3s+2}^0$, $B_{3s+2}^0$, $A_{m+1}^1$, $B_{m+1}^1$, $A_{n+1}^2$, $B_{n+1}^2$ are true at $w'$.

Denote by $w_s^a$ and $w_s^b$ points of the frame $\mathfrak{F}$ such that
\begin{enumerate}
  \item $w_s^a \geq_R w'$, $w_s^a \models B_{3s}^0$, and $w_s^a \not\models A_{3s}^0, B_{3s-1}^0$;
  \item $w_s^b \geq_R w'$, $w_s^b \models A_{3s+1}^0$, and $w_s^b \not\models A_{3s}^0, B_{3s}^0$.
\end{enumerate}
It is clear that these points exist.

If $w_s^a$ or $w_s^b$ are in $\{a_n^0, b_n^0\}$ for some $n \geq -5$, then $a_{-5}^0 \in \mathfrak{V}'(r)$, $a_{-4}^0, b_{-5}^0 \notin \mathfrak{V}'(r)$. By analogy with Lemma~\ref{L:Semantic}, it is not hard to prove that $w_s^a = a_{3s+1}^0$, $w_s^b = b_{3s+1}^0$ by induction on $s \geq 0$.

Let $w_s^a$ and $w_s^b$ are not in $\{a_n^0, b_n^0\}$ for all $n \geq -5$. Then there are $n \geq -5$ and $j \in \{1,2\}$ such that $w_s^a$ or $w_s^b$ are in $\{a_n^j, b_n^j\}$. Evidently, either $a_{-5}^0 \notin \mathfrak{V}'(r)$, $a_{-4}^0 \in \mathfrak{V}'(r)$, or $b_{-5}^0 \notin \mathfrak{V}'(r)$. We need to consider the following cases:
\begin{enumerate}
  \item $a_{-5}^j \notin \mathfrak{V}'(r)$. In this case, $A_{-4}^0$ is refuted at $a_0^0$ if $j = 1$ and $b_0^0$ if $j = 2$. Then it can easily be seen that $a_{-5}^0, b_{-5}^0 \in \mathfrak{V}'(r)$ and $a_0^0 \notin \mathfrak{V}'(r)$. If $b_{-5}^j \notin \mathfrak{V}'(r)$, then $B_{-4}^0$ is true at $w_s^a$, $w_s^b$, which is impossible. If $b_{-5}^j \in \mathfrak{V}'(r)$, then $B_{-4}^0$ is refuted at $a_{-3}^j$, $b_{-4}^j$ and therefore $A_{-3}^0$ is true at $w_s^a$, $w_s^b$, which is impossible. Hence $a_{-5}^j \in \mathfrak{V}'(r)$.
  \item $b_{-5}^j \in \mathfrak{V}'(r)$. In this case, $B_{-4}^0$ is refuted at $b_{-2}^j$. Then $A_{-3}^0$ is true at $w_s^a$, $w_s^b$, which is impossible. Hence $b_{-5}^j \notin \mathfrak{V}'(r)$.
  \item $a_{-4}^j \in \mathfrak{V}'(r)$. In this case, $A_{-4}^0$ is refuted at $a_0^0$ if $j = 1$ and $b_0^0$ if $j = 2$. As the above, we have that $a_{-5}^0, b_{-5}^0 \in \mathfrak{V}'(r)$ and $a_0^0 \notin \mathfrak{V}'(r)$. Then $B_{-4}^0$ is refuted at $a_{-3}^j$, $b_{-4}^j$ and therefore $A_{-3}^0$ is true at $w_s^a$, $w_s^b$, which is impossible. Hence $a_{-4}^j \notin \mathfrak{V}'(r)$.
\end{enumerate}
So, we have that $a_{-5}^j \in \mathfrak{V}'(r)$ and $a_{-4}^j, b_{-5}^j \notin \mathfrak{V}'(r)$. Then $A_{-4}^0$ is refuted at $a_{-4}^j$ and $B_{-4}^0$ is refuted at $b_{-4}^j$. It can easily be proved by induction on $s \geq 0$ that $w_s^a = a_{4s+2}^j$ and $w_s^b = b_{4s+3}^j$. Therefore, if $w_s^a = a_{4s+2}^{j_1}$ and $w_s^b = b_{4s+3}^{j_2}$ for some $j_1, j_2 \in \{1, 2\}$, then $w' \leq_R a_{4s+3}^{j_2}$ and therefore $A_{3s+2}^0$ is refuted at $w'$, which is impossible. Hence $w_s^a$ or $w_s^b$ are in $\{a_n^0, b_n^0\}$ for some $n \geq -5$ and therefore $a_{-5}^0 \in \mathfrak{V}'(r)$, $a_{-4}^0, b_{-5}^0 \notin \mathfrak{V}'(r)$.

Since $C_1$ is refuted at $w_1$ if $w_1 \leq_R a_0^0$ and $C_2$ is refuted at $w_2$ if $w_2 \leq_R b_0^0$, we have that, for a given $i \geq 0$ and $j \in \{1,2\}$, the formulas $A_i^j$, $B_i^j$ are refuted at $a_k^j$, $b_k^j$ for some $k \geq 0$ and true at $a_k^{j'}$, $b_k^{j'}$ for all $k \geq 0$, $j' \in \{0,1,2\} \setminus \{j\}$ by analogy with the above. Therefore, $a_{-5}^1 \in \mathfrak{V}'(p)$, $a_{-4}^1, b_{-5}^1 \notin \mathfrak{V}'(p)$ and $a_{-5}^2 \in \mathfrak{V}'(q)$, $a_{-4}^2, b_{-5}^2 \notin \mathfrak{V}'(q)$.

Now if we recall the proof of Lemma~\ref{L:Semantic}, then we obtain that $w' \leq_R a_{3s+1}^0$, $w' \leq_R b_{3s+1}^0$, $w' \leq_R a_m^1$, $w' \leq_R b_m^1$, $w' \leq_R a_n^2$, $w' \leq_R b_n^2$ and $w' \nleq_R a_{3s+2}^0$, $w' \nleq_R b_{3s+2}^0$, $w' \nleq_R a_{m+1}^1$, $w' \nleq_R b_{m+1}^1$, $w' \nleq_R a_{n+1}^2$, $w' \nleq_R b_{n+1}^2$. Evidently, the frame $\mathfrak{F}$ contains a unique maximal point $e_{[s,m,n]}$ satisfying this condition. Hence $e_{[s,m,n]} \in W$ and $w \leq_R w' \leq_R e_{[s,m,n]}$. The lemma is proved.
\end{proof}

\subsection{Key formulas}

In this section, we consider the key formulas depending on variables $p$, $q$, $r$. First, let us define the following formulas $F_k = F_k[p,q,x,y]$ and $G_k = G_k[p,q,x,y]$ in variables $p$, $q$, $x$ and $y$:
\begin{equation*}
  \begin{array}{lll}
    F_0 & = & p, \\
    G_0 & = & q, \\
    F_1 & = & y \wedge q \to x \vee p, \\
    G_1 & = & y \wedge p \to x \vee q, \text{ and } \\
    F_k & = & y \wedge G_{k-1} \to x \vee F_{k-1} \vee G_{k-2}, \\
    G_k & = & y \wedge F_{k-1} \to x \vee G_{k-1} \vee F_{k-2}, \text{ for all } k \geq 2.
  \end{array}
\end{equation*}
Now we introduce the following key formulas:
\begin{equation*}
  \begin{array}{lll}
    F_k^1[p,q] & = & F_k[p,q,C_1,C_2], \\
    G_k^1[p,q] & = & G_k[p,q,C_1,C_2], \\
    F_k^2[p,q] & = & F_k[p,q,C_2,C_1], \\
    G_k^2[p,q] & = & G_k[p,q,C_2,C_1].
  \end{array}
\end{equation*}
Note that the formulas $F_k^m$ and $G_k^m$ are depending on three variables $p$, $q$, and $r$, for all $k \geq 0$ and $m \in \{1,2\}$.

Besides, we define the following auxiliary formulas:
\begin{equation*}
  \begin{array}{lll}
    P_{i,j} & = & (C_2 \to C_1 \vee A_{i}^1 \vee B_{i-1}^1) \wedge (C_1 \to C_2 \vee A_{i}^2 \vee B_{i-1}^2), \\
    Q_{i,j} & = & (C_2 \to C_1 \vee A_{i-1}^1 \vee B_{i}^1) \wedge (C_1 \to C_2 \vee A_{i-1}^2 \vee B_{i}^2), \\
  \end{array}
\end{equation*}
for all $i,j \geq -1$. The following lemma is describing the basic properties of the key formulas.

\begin{lemma} \label{L:KeyFormulas:3}
For all $i, j \geq -1$, $k \geq 1$ and $m \in \{1,2\}$,
\begin{equation*}
  \begin{array}{lll}
  \textbf{Int} & \vdash & F_k^m[P_{i,j}, Q_{i,j}] \leftrightarrow A_{n+k}^m, \\
  \textbf{Int} & \vdash & G_k^m[P_{i,j}, Q_{i,j}] \leftrightarrow B_{n+k}^m,
  \end{array}
\end{equation*}
where
\begin{equation*}
  n =
  \left\{
    \begin{array}{ll}
      i, & m = 1; \\
      j, & m = 2.
    \end{array}
  \right.
\end{equation*}
\end{lemma}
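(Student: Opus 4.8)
The plan is to prove both equivalences simultaneously by induction on $k \ge 1$, with $k=1$ and $k=2$ handled directly and $k \ge 3$ by a short congruence argument; everything is carried out by elementary derivations in $\mathbf{Int}$, no Kripke semantics being needed. Write $P^{(m)}$ for the conjunct of $P_{i,j}$ built from $A^m, B^m$ (it carries the index $n$ of the statement) and $P^{(3-m)}$ for the other conjunct, and similarly $Q^{(m)}, Q^{(3-m)}$ from $Q_{i,j}$; thus $P^{(m)}$ has the form $C_{3-m} \to (C_m \vee A_n^m \vee B_{n-1}^m)$ and $P^{(3-m)}$ the form $C_m \to (C_{3-m} \vee \cdots)$, dually for $Q$. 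Before the induction I would record a few facts, each a one-line derivation in $\mathbf{Int}$: (a) currying, $\mathbf{Int} \vdash (\varphi \wedge \psi \to \chi) \leftrightarrow (\psi \to (\varphi \to \chi))$, which applied to the (regular, since $n+1 \ge 0$) defining equations of $A_{n+1}^m, B_{n+1}^m$ gives $\mathbf{Int} \vdash A_{n+1}^m \leftrightarrow (B_n^m \to P^{(m)})$ and $\mathbf{Int} \vdash B_{n+1}^m \leftrightarrow (A_n^m \to Q^{(m)})$; (b) distributivity of $\vee$ over $\wedge$; (c) the monotonicity schema $\mathbf{Int} \vdash A_{\ell-1}^m \to A_\ell^m$, $\mathbf{Int} \vdash B_{\ell-1}^m \to B_\ell^m$ (the succedent of the defining equation of $A_\ell^m$, resp.\ $B_\ell^m$, contains $A_{\ell-1}^m$, resp.\ $B_{\ell-1}^m$, so the claim follows from $\vee$-introduction and $(\to_1)$ — and this holds for the irregular equations at $\ell \in \{-1,-2\}$ too, so $i=-1$ or $j=-1$ needs no special treatment); (d) that the "cross" conjuncts are trivial relative to the opposite constant, $\mathbf{Int} \vdash C_{3-m} \to P^{(3-m)}$, $\mathbf{Int} \vdash C_{3-m} \to Q^{(3-m)}$, together with $\mathbf{Int} \vdash A_n^m \to P^{(m)}$, $\mathbf{Int} \vdash B_n^m \to Q^{(m)}$, all again from $(\to_1)$ and $\vee$-introduction.

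For $k=1$: by definition $F_1^m[P_{i,j},Q_{i,j}] = C_{3-m} \wedge Q_{i,j} \to C_m \vee P_{i,j}$. Splitting the succedent over the two conjuncts of $P_{i,j}$ by (b), the "cross" half is an $\mathbf{Int}$-theorem by (d) and drops; using (d) once more to delete the "cross" conjunct $Q^{(3-m)}$ from the antecedent, one gets $\mathbf{Int} \vdash F_1^m[P_{i,j},Q_{i,j}] \leftrightarrow (C_{3-m} \wedge Q^{(m)} \to C_m \vee P^{(m)})$. By (a) it remains to check $\mathbf{Int} \vdash (C_{3-m} \wedge Q^{(m)} \to C_m \vee P^{(m)}) \leftrightarrow (B_n^m \to P^{(m)})$, a short propositional derivation: for $\leftarrow$, from $B_n^m$ get $Q^{(m)}$ by (d), so the left implication yields $C_m \vee P^{(m)}$, and each of $C_m$, $A_{n-1}^m$ (via (c),(d)), $B_n^m$ (via the hypothesis) yields $C_m \vee P^{(m)}$; for $\rightarrow$, under $B_n^m$ one has $Q^{(m)}$ and then, given $C_{3-m}$, the hypothesis gives $C_m \vee P^{(m)}$, whence $C_m$ or $P^{(m)}$ together with $C_{3-m}$ yields the succedent of $P^{(m)}$, i.e.\ $P^{(m)}$. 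The equivalence for $G_1^m$ is symmetric (swap $P^{(m)} \leftrightarrow Q^{(m)}$, $A \leftrightarrow B$).

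For $k=2$: $F_2^m[P_{i,j},Q_{i,j}] = C_{3-m} \wedge G_1^m[P_{i,j},Q_{i,j}] \to C_m \vee F_1^m[P_{i,j},Q_{i,j}] \vee Q_{i,j}$; replacing the two key subformulas via the $k=1$ case and simplifying $Q_{i,j}$ to $Q^{(m)}$ inside the $C_{3-m}$-enriched context by (d), one reduces to $\mathbf{Int} \vdash (C_{3-m} \wedge B_{n+1}^m \to C_m \vee A_{n+1}^m \vee Q^{(m)}) \leftrightarrow A_{n+2}^m$, again a short monotonicity argument in the style of $k=1$, the only new point being that an occurrence of $A_{n-1}^m$ is lifted to $A_{n+1}^m$ by two applications of (c); symmetrically for $G_2^m$. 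The step $k \ge 3$ is then immediate: after $x,y$ are set to $C_m, C_{3-m}$, the defining recursion $F_k^m = C_{3-m} \wedge G_{k-1}^m \to C_m \vee F_{k-1}^m \vee G_{k-2}^m$ has exactly the shape of the (regular, $n+k \ge 2$) defining recursion of $A_{n+k}^m$; all of $F_{k-1}^m[P_{i,j},Q_{i,j}], G_{k-1}^m[P_{i,j},Q_{i,j}], G_{k-2}^m[P_{i,j},Q_{i,j}]$ have index $\ge 1$, so the induction hypothesis applies, and replacement of provably equivalent subformulas in $\mathbf{Int}$ concludes; likewise for $G_k^m$.

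I expect the base cases to be the only real work. The delicate points there are making the "cross" $A^{3-m}, B^{3-m}$-parts of $P_{i,j}, Q_{i,j}$ provably vanish — which is exactly what fact (d) provides — and then arranging the monotonicity chains of (c) so that the one-step (for $k=1$) or two-step (for $k=2$) shift in the index of $A^m, B^m$ lands precisely on $A_{n+1}^m$, resp.\ $A_{n+2}^m$; one must also keep track that the currying and monotonicity facts remain valid for the irregular defining equations at levels $-1$ and $-2$. Once $k=1,2$ are in hand, the induction runs mechanically.
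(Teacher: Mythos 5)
Your proposal is correct and follows essentially the same route as the paper: induction on $k$ with the two base cases $k=1$ and $k=2$ done by hand (using currying of the defining equations, the monotonicity facts $A_{\ell-1}^m\to A_\ell^m$, $B_{\ell-1}^m\to B_\ell^m$, and the trivialization of the ``cross'' conjuncts of $P_{i,j},Q_{i,j}$ under $C_{3-m}$), followed by a replacement-of-equivalents step for $k\ge 3$ that the paper leaves to the reader. Your write-up is in fact slightly more careful than the paper's (explicitly noting why $k=2$ cannot be folded into the step, and that monotonicity survives the irregular equations at levels $-1,-2$); the only blemish is a garbled sentence in the $\leftarrow$ half of the $k=1$ case, where the case split should come from firing the antecedent of $Q^{(m)}$ under $C_{3-m}$, not from ``the left implication'' — the three cases you then list are the right ones, so this is a wording slip, not a gap.
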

\begin{proof}
By induction on $k \geq 1$. Without loss of generality, we can assume that $m = 1$. The basis of induction consists of two cases: $k = 1$ and $k = 2$.

\textbf{Induction base:} $k = 1$. In this case we have
\begin{equation*}
  \begin{array}{rcl}
  F_1^1[P_{i,j}, Q_{i,j}] &=& C_2 \wedge Q_{i,j} \to C_1 \vee P_{i,j}, \\
  G_1^1[P_{i,j}, Q_{i,j}] &=& C_2 \wedge P_{i,j} \to C_1 \vee Q_{i,j}.
  \end{array}
\end{equation*}
It can easily be checked that the following derivations holds in $\textbf{Int}$:
\begin{equation*}
  \begin{array}{rclrcl}
  \textbf{Int} & \vdash & C_2 \wedge B_i^1 \to C_2 \wedge Q_{i,j}, &
  \textbf{Int} & \vdash & C_1 \vee P_{i,j} \to (C_2 \to C_1 \vee A_i^1 \vee B_{i-1}^1), \\
  \textbf{Int} & \vdash & C_2 \wedge A_i^1 \to C_2 \wedge P_{i,j}, &
  \textbf{Int} & \vdash & C_1 \vee Q_{i,j} \to (C_2 \to C_1 \vee A_{i-1}^1 \vee B_i^1).
  \end{array}
\end{equation*}
Hence,
\begin{equation*}
  \begin{array}{lll}
  \textbf{Int} & \vdash & F_1^1[P_{i,j}, Q_{i,j}] \to A_{i+1}^1, \\
  \textbf{Int} & \vdash & G_1^1[P_{i,j}, Q_{i,j}] \to B_{i+1}^1.
  \end{array}
\end{equation*}

Conversely, since the formulas $A_{i-1}^1 \to A_i^1$ and $B_{i-1}^1 \to B_i^1$ are derivable from $\textbf{Int}$, we have
\begin{equation*}
  \begin{array}{lll}
  \textbf{Int},\ A_{i+1}^1 & \vdash & C_1 \vee A_{i-1}^1 \vee B_i^1 \to (C_2 \to C_1 \vee A_i^1 \vee B_{i-1}^1), \\
  \textbf{Int},\ B_{i+1}^1 & \vdash & C_1 \vee A_i^1 \vee B_{i-1}^1 \to (C_2 \to C_1 \vee A_{i-1}^1 \vee B_i^1)
  \end{array}
\end{equation*}
and therefore the following derivations holds in $\textbf{Int}$:
\begin{equation*}
  \begin{array}{lll}
  \textbf{Int},\ A_{i+1}^1 & \vdash & C_2 \wedge (C_2 \to C_1 \vee A_{i-1}^1 \vee B_i^1) \to P_{i,j}, \\
  \textbf{Int},\ B_{i+1}^1 & \vdash & C_2 \wedge (C_2 \to C_1 \vee A_i^1 \vee B_{i-1}^1) \to Q_{i,j}.
  \end{array}
\end{equation*}
Hence,
\begin{equation*}
  \begin{array}{lll}
  \textbf{Int} & \vdash & A_{i+1}^1 \to F_1^1[P_{i,j}, Q_{i,j}], \\
  \textbf{Int} & \vdash & B_{i+1}^1 \to G_1^1[P_{i,j}, Q_{i,j}].
  \end{array}
\end{equation*}

\textbf{Induction base:} $k = 2$. In this case we have
\begin{equation*}
  \begin{array}{rcl}
  \textbf{Int} & \vdash & F_2^1[P_{i,j}, Q_{i,j}] \leftrightarrow (C_2 \wedge B_{i+1}^1 \to C_1 \vee A_{i+1}^1 \vee Q_{i,j}), \\
  \textbf{Int} & \vdash & G_2^1[P_{i,j}, Q_{i,j}] \leftrightarrow (C_2 \wedge A_{i+1}^1 \to C_1 \vee B_{i+1}^1 \vee P_{i,j}).
  \end{array}
\end{equation*}
Furthermore, it follows easily that:
\begin{equation*}
  \begin{array}{rclrcl}
  \textbf{Int} & \vdash & C_2 \wedge B_i^1 \to Q_{i,j}, &
  \textbf{Int} & \vdash & Q_{i,j} \to (C_2 \to C_1 \vee A_{i+1}^1 \vee B_i^1), \\
  \textbf{Int} & \vdash & C_2 \wedge A_i^1 \to P_{i,j}, &
  \textbf{Int} & \vdash & P_{i,j} \to (C_2 \to C_1 \vee A_i^1 \vee B_{i+1}^1).
  \end{array}
\end{equation*}
Hence,
\begin{equation*}
  \begin{array}{lll}
  \textbf{Int} & \vdash & F_2^1[P_{i,j}, Q_{i,j}] \leftrightarrow A_{i+2}^1, \\
  \textbf{Int} & \vdash & G_2^1[P_{i,j}, Q_{i,j}] \leftrightarrow B_{i+2}^1.
  \end{array}
\end{equation*}

\textbf{Induction step} is straightforward and left to the reader. The lemma is proved.
\end{proof}

\subsection{Encoding of the Minsky machine}

Now we encode instructions of the Minsky machine $\mathcal{M}$ as superintuitionistic formulas such that derivations from $\textbf{Int}$ and these formulas are simulate transformations of $\mathcal{M}$.

First, let us define the following formulas containing only tree variables $p$, $q$, $r$:
\begin{equation*}
  \begin{aligned}
    \hat{E}_{s,i,j} & = A_{3s+2}^0 \wedge B_{3s+2}^0 \wedge F_{i+1}^1 \wedge G_{i+1}^1 \wedge F_{j+1}^2 \wedge G_{j+1}^2 \to\\
                    & \to A_{3s+1}^0 \vee B_{3s+1}^0 \vee F_{i}^1 \vee G_{i}^1 \vee F_{j}^2 \vee G_{j}^2, \\
    \hat{E}_{s,0,*} & = A_{3s+2}^0 \wedge B_{3s+2}^0 \wedge A_{1}^1 \wedge B_{1}^1 \to
                        A_{3s+1}^0 \vee B_{3s+1}^0 \vee A_0^1 \vee B_0^1 \vee q, \\
    \hat{E}_{s,*,0} & = A_{3s+2}^0 \wedge B_{3s+2}^0 \wedge A_{1}^2 \wedge B_{1}^2 \to
                        A_{3s+1}^0 \vee B_{3s+1}^0 \vee p \vee A_0^2 \vee B_0^2, \\
    \hat{E}_{s,0,0} & = E_{s,0,0},
  \end{aligned}
\end{equation*}
where $s \geq 0$, $i,j \geq 1$. By Lemma~\ref{L:KeyFormulas:3}, we have the following evident lemma.

\begin{lemma} \label{L:Equivalence}
For all $s,m,n \geq 0$,
\begin{equation*}
  \textbf{Int} \vdash E_{s,m,n} \leftrightarrow
  \left\{
    \begin{array}{ll}
      \hat{E}_{s,i,j}[P_{m-i,n-j}, Q_{m-i,n-j}], & 1 \leq i \leq m+1,\\ & 1 \leq j \leq n+1; \\
      A_{n+1}^2 \wedge B_{n+1}^2 \to \hat{E}_{s,0,*}[p, A_n^2 \vee B_n^2], & m = 0,\ n \geq 1; \\
      A_{m+1}^1 \wedge B_{m+1}^1 \to \hat{E}_{s,*,0}[A_m^1 \vee B_m^1, q], & m \geq 1,\ n = 0; \\
      \hat{E}_{s,0,0}, & m = 0,\ n = 0.
    \end{array}
  \right.
\end{equation*}
\end{lemma}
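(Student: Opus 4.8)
The plan is to handle the four cases of the displayed conditional one at a time, in each case rewriting the right-hand side into $E_{s,m,n}$ using only Lemma~\ref{L:KeyFormulas:3}, the replacement theorem for $\textbf{Int}$ (from $\textbf{Int}\vdash C_\ell\leftrightarrow C_\ell'$ for all $\ell$ one obtains $\textbf{Int}\vdash\varphi(C_1,\dots,C_N)\leftrightarrow\varphi(C_1',\dots,C_N')$ for every context $\varphi$ built from $\wedge,\vee,\to$), and a few elementary intuitionistic equivalences, notably currying $(\alpha\to(\beta\to\gamma))\leftrightarrow(\alpha\wedge\beta\to\gamma)$ together with commutativity, associativity and idempotence of $\wedge$ in an antecedent.

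For the main case $1\leq i\leq m+1$, $1\leq j\leq n+1$, let $\pi$ be the substitution $p\mapsto P_{m-i,n-j}$, $q\mapsto Q_{m-i,n-j}$. The formulas $A_{3s+2}^0,B_{3s+2}^0,A_{3s+1}^0,B_{3s+1}^0$ and the constants $C_1,C_2$ occurring inside the $F$'s and $G$'s contain only the variable $r$, so $\pi$ leaves them untouched and sends each constituent $F_k^\mu$, $G_k^\mu$ of $\hat{E}_{s,i,j}$ to $F_k^\mu[P_{m-i,n-j},Q_{m-i,n-j}]$, $G_k^\mu[P_{m-i,n-j},Q_{m-i,n-j}]$. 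Before invoking Lemma~\ref{L:KeyFormulas:3} one checks its hypotheses: $i,j\geq 1$ keep all formula indices $k\in\{i,i+1,j,j+1\}$ at least $1$ (so the degenerate $F_0,G_0$ never arise), while $i\leq m+1$ and $j\leq n+1$ give $m-i\geq -1$ and $n-j\geq -1$, so $P_{m-i,n-j}$, $Q_{m-i,n-j}$ are admissible and the lemma applies with parameter indices $(m-i,n-j)$. Writing $[\,\cdot\,]$ for $[P_{m-i,n-j},Q_{m-i,n-j}]$, it yields
\begin{equation*}
  \textbf{Int}\vdash F_{i+1}^1[\,\cdot\,]\leftrightarrow A_{m+1}^1,\quad F_i^1[\,\cdot\,]\leftrightarrow A_m^1,\quad F_{j+1}^2[\,\cdot\,]\leftrightarrow A_{n+1}^2,\quad F_j^2[\,\cdot\,]\leftrightarrow A_n^2,
\end{equation*}
and analogously for the four $G$-constituents (the superscript-$1$ formulas use the first parameter index $m-i$, the superscript-$2$ ones the second index $n-j$). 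Replacing every constituent of $\pi\,\hat{E}_{s,i,j}$ by its $\textbf{Int}$-equivalent via the replacement theorem turns $\pi\,\hat{E}_{s,i,j}$ literally into $E_{s,m,n}$, which settles this case.

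The other three cases are purely syntactic and do not even need Lemma~\ref{L:KeyFormulas:3}. If $m=0$ and $n\geq 1$, then $A_0^1,B_0^1,A_1^1,B_1^1$ involve only $p$ and $r$, so the substitution $q\mapsto A_n^2\vee B_n^2$ merely replaces the single occurrence of $q$ in $\hat{E}_{s,0,*}$; prefixing $A_{n+1}^2\wedge B_{n+1}^2\to$ and currying then yields $E_{s,0,n}$ up to reordering conjuncts in the antecedent. The case $m\geq 1$, $n=0$ is symmetric, using that $A_0^2,B_0^2,A_1^2,B_1^2$ involve only $q$ and $r$. Finally $\hat{E}_{s,0,0}$ equals $E_{s,0,0}$ by definition. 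Since every step above preserves $\textbf{Int}$-provable equivalence, the lemma follows.

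As the paper indicates, there is no real difficulty here; the one point requiring attention is the index bookkeeping in the main case, i.e.\ verifying that after the substitution the parameter indices $m-i$, $n-j$ stay $\geq -1$ and the formula indices $i$, $i+1$, $j$, $j+1$ stay $\geq 1$, so that Lemma~\ref{L:KeyFormulas:3} is legitimately applicable. Everything else is a routine use of the replacement theorem and standard intuitionistic manipulations of conjunctions in antecedents.
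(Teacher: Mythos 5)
Your proof is correct and matches the paper's intent: the paper gives no written proof, merely remarking that the lemma is ``evident'' by Lemma~\ref{L:KeyFormulas:3}, and your argument is exactly the expected expansion of that remark --- substitute, apply Lemma~\ref{L:KeyFormulas:3} with parameter indices $(m-i,n-j)$, use replacement, and handle the boundary cases by direct syntactic manipulation. The index bookkeeping you carry out (checking $m-i,n-j\geq -1$ and that all formula indices stay $\geq 1$) is accurate, so nothing further is needed.
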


Let
\begin{equation*}
  \varphi(x) =
  \left\{
    \begin{array}{ll}
      x-1, & x \geq 1; \\
      0,   & x = 0; \\
      0,   & x = *.
    \end{array}
  \right.
\end{equation*}
Now we prove that if the Kripke frame $\mathfrak{F}$ refutes $\hat{E}_{s,i,j}$ then it refutes $\hat{E}_{s,i,j}$ at a point $e_{[s,m,n]}$ for some $m \geq \varphi(i)$, $n \geq \varphi(j)$ such that $\alpha_0 \stackrel{\mathcal{M}}{\Longmapsto} [s,m,n]$.

\begin{lemma} \label{L:Semantic:3}
If $(\mathfrak{F},w) \not\models \hat{E}_{s,i,j}$, then $w \leq_R e_{[s,m,n]}$ for some $m \geq \varphi(i)$, $n \geq \varphi(j)$ such that $\alpha_0 \stackrel{\mathcal{M}}{\Longmapsto} [s,m,n]$.
\end{lemma}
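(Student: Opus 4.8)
The idea is to reduce this statement to Lemmas~\ref{L:Semantic:2} and~\ref{L:Equivalence}. Suppose $(\mathfrak{F},w) \not\models \hat{E}_{s,i,j}$, i.e., some valuation $\mathfrak{V}'$ refutes $\hat{E}_{s,i,j}$ at $w$. The plan is to case-split on the form of $\hat{E}_{s,i,j}$ exactly as in the four cases of Lemma~\ref{L:Equivalence}, and in each case exhibit a valuation under which the frame refutes a genuine code $E_{s,m,n}$ at $w$ (or at a point above $w$), then invoke Lemma~\ref{L:Semantic:2} to conclude $e_{[s,m,n]} \in W$ and $w \leq_R e_{[s,m,n]}$, with $m,n$ having the required lower bounds $\varphi(i), \varphi(j)$.

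First I would treat the generic case $i,j \geq 1$. Here $\hat{E}_{s,i,j}$ is obtained from $E_{s,m,n}$ by the substitution of Lemma~\ref{L:KeyFormulas:3}: $A_m^1 = F_i^1[P_{m-i,n-j},Q_{m-i,n-j}]$ etc. Since $\mathfrak{V}'$ refutes $\hat{E}_{s,i,j}$ at $w$, there is $w' \geq_R w$ at which the antecedent conjuncts $F_{i+1}^1, G_{i+1}^1, F_{j+1}^2, G_{j+1}^2$ and $A_{3s+2}^0, B_{3s+2}^0$ are true and the disjuncts $F_i^1, G_i^1, F_j^2, G_j^2, A_{3s+1}^0, B_{3s+1}^0$ are refuted. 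The key observation is that $F_i^1, G_i^1$ etc.\ are built so that they behave like the formulas $A_k^1, B_k^1$ for the "shifted" index; since they are refuted but the next ones are not, I would argue (by the same inductive analysis used in Lemma~\ref{L:Semantic} and in Lemma~\ref{L:KeyFormulas:3}, substituting $P,Q$ for $p,q$) that $w'$ must lie below the points $a_m^1, b_m^1, a_n^2, b_n^2, a_{3s+1}^0, b_{3s+1}^0$ for appropriate $m \geq i \geq \varphi(i)+1$ and $n \geq j \geq \varphi(j)+1$, and not below $a_{m+1}^1$ etc. That is exactly the hypothesis of Lemma~\ref{L:Semantic:2} applied to $E_{s,m,n}$, so $w \leq_R w' \leq_R e_{[s,m,n]}$ and $\alpha_0 \stackrel{\mathcal{M}}{\Longmapsto}[s,m,n]$ follows from $e_{[s,m,n]} \in W$.

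For the degenerate cases I would argue similarly. If $i = 0$ (so $\hat{E}_{s,0,*}$, with a bare $q$ in the succedent replacing the pair $A_0^1, B_0^1$), the refuting point $w'$ has $q$ false; by the valuation analysis this forces $w' \leq_R a_{-4}^2$ or $w' \leq_R b_{-5}^2$, but combined with $A_1^1, B_1^1$ true above the antecedent one checks $w'$ lies below $a_0^1$ (equivalently $m=0=\varphi(0)$) and below $a_n^2,b_n^2$ for some $n \geq 1 > \varphi(*)$. Symmetrically for $j=0$. The case $i=j=0$ is literally $E_{s,0,0}$, and Lemma~\ref{L:Semantic:2} applies directly with $m=n=0$. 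In every case the lower bounds $m \geq \varphi(i)$, $n \geq \varphi(j)$ hold because the substituted $F,G$-formulas of positive index correspond to strictly shifted $A,B$-indices.

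\textbf{Main obstacle.} The bookkeeping step — showing that a refutation of the $F,G$-substituted antecedent/succedent at $w'$ in an \emph{arbitrary} valuation $\mathfrak{V}'$ forces $w'$ below the expected $a$- and $b$-points — is the only real work. It is not covered verbatim by Lemma~\ref{L:Semantic} (which is about the fixed valuation $\mathfrak{V}$) nor by Lemma~\ref{L:Semantic:2} (which is about the specific code $E_{s,m,n}$), so one must re-run the inductive frame analysis with $P_{i,j}, Q_{i,j}$ in the role of $p,q$; the relevant equivalences are precisely those established in the proof of Lemma~\ref{L:KeyFormulas:3}, so the argument is a matter of carefully transferring that induction into the semantic setting and then quoting Lemma~\ref{L:Semantic:2}.
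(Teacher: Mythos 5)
There is a genuine gap at exactly the point you flag as the ``main obstacle,'' and the route you propose for closing it would not work. The content of Lemma~\ref{L:Semantic:3} is that $\hat{E}_{s,i,j}$ --- a formula with \emph{free} occurrences of $p$ and $q$ inside $F_i^1, G_i^1, F_j^2, G_j^2$ --- cannot be refuted on $\mathfrak{F}$ under \emph{any} valuation except below a suitable $e_{[s,m,n]}$. You propose to handle this by ``re-running the inductive frame analysis with $P_{i,j}, Q_{i,j}$ in the role of $p,q$'' and quoting the equivalences of Lemma~\ref{L:KeyFormulas:3}. But those equivalences concern one specific substitution instance; if the valuation of $p,q$ were constrained to mimic $P_{m-i,n-j}, Q_{m-i,n-j}$, the formula would just become (Int-equivalent to) $E_{s,m,n}$ and the statement would collapse to Lemma~\ref{L:Semantic:2}, leaving nothing to prove. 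The whole point is to rule out refutations under \emph{rogue} valuations, and for that Lemma~\ref{L:KeyFormulas:3} gives no information. The paper's proof instead carries out a fresh semantic induction on $k$: for $k=1$ it uses the conditions $w_f \in \mathfrak{V}'(q)\setminus\mathfrak{V}'(p)$, $w_g \in \mathfrak{V}'(p)\setminus\mathfrak{V}'(q)$ together with the truth of $C_2$ and falsity of $C_1$ to force the refutation points of $F_1^1, G_1^1$ to be incomparable points $c_{i'}^1, d_{j'}^1$ with $|i'-j'|<2$; the case $k=2$ then synchronizes the two levels to a common base $l$; and only then does the inductive step climb the ladder in lockstep. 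None of this appears in your proposal, and it is precisely the part that cannot be borrowed from Lemma~\ref{L:Semantic} (fixed valuation) or Lemma~\ref{L:KeyFormulas:3} (syntactic, fixed substitution).

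Two smaller points. First, your bound $m \geq i$ in the generic case is not what the analysis yields: the ladder may be based at level $l=0$, giving $m = i+l-1 \geq i-1 = \varphi(i)$ only; this still suffices for the lemma, but it shows the level bookkeeping was not actually done. Second, in the case $i=0$ you argue that $q$ false at $w'$ forces $w' \leq_R a_{-4}^2$ or $w' \leq_R b_{-5}^2$; that implication holds only for the distinguished valuation $\mathfrak{V}$, not for an arbitrary $\mathfrak{V}'$, so this sub-argument is unsound as stated (fortunately the bound $m \geq \varphi(0) = 0$ needed there is trivial).
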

\begin{proof}
Let $\mathfrak{M}' = \<\mathfrak{F},\mathfrak{V}'\>$ be a Kripke model such that $(\mathfrak{M}',w) \not\models \hat{E}_{s,i,j}$. Since $w \not\models \hat{E}_{s,i,j}$, there is a point $w' \geq_R w$ such that the formulas $A_{3s+1}^0$ and $B_{3s+1}^0$ are refuted at $w'$, and the formulas $A_{3s+2}^0$ and $B_{3s+2}^0$ are true at $w'$. By the proof of Lemma~\ref{L:Semantic:2}, we have that $w \leq_R w' \leq_R e_{[s,m,n]}$ for some $m, n \geq 0$ such that $\alpha_0 \stackrel{\mathcal{M}}{\Longmapsto} [s,m,n]$. It is clear that $m = 0$ if $i = 0$ and $n = 0$ if $j = 0$. Hence, in order to prove the lemma it is sufficient to show that $m \geq i-1$, $n \geq j-1$ for some $i \geq 1$, $j \geq 1$.

If $i \geq 1$, then the formulas $F_{i}^1$, $G_{i}^1$ are refuted at $w'$ and the formulas $F_{i+1}^1$, $G_{i+1}^1$ are true at $w'$. Now we prove that if $F_k^1$ is refuted at a point $f_k^1$ and $G_k^1$ is refuted at a point $g_k^1$, then $f_k^1 \leq_R c_{k+l-1}^1$ and $g_k^1 \leq_R d_{k+l-1}^1$ for some $l \geq 0$ and $\{c,d\} = \{a,b\}$. By induction on $k \geq 1$.

\textbf{Induction base:} $k = 1$. In this case, there are points $w_f \geq_R f_1^1$ and $w_g \geq_R g_1^1$ such that
\begin{enumerate}
  \item $C_1$ is refuted at $w_f$, $w_g$, then the Kripke frame $\mathfrak{F}$ contains pathes of length 5 from $w_f$, $w_g$ to maximal points and therefore $w_f \leq_R c_0^{j_1}$ and $w_g \leq_R d_0^{j_2}$ for some $j_1, j_2 \in \{0,1,2\}$ and $c,d \in \{a,b\}$;
  \item $C_2$ is true at $w_f$, $w_g$, therefore $w_f \nleq_R b_{-1}^0$, $w_g \nleq_R b_{-1}^0$ by the proof of  Lemma~\ref{L:Semantic:2};
  \item $w_f \in \mathfrak{V}'(q) \setminus \mathfrak{V}'(p)$ and $w_g \in \mathfrak{V}'(p) \setminus \mathfrak{V}'(q)$, therefore $w_f$, $w_g$ are incomparable points.
\end{enumerate}
Thus, $w_f = c_{i'}^1$ and $w_g = d_{j'}^1$ for some $i',j' \geq 0$ such that $|i' - j'| < 2$, and $\{c,d\} = \{a,b\}$.

\textbf{Induction base:} $k = 2$. In this case, there are points $w_f \geq_R f_2^1$ and $w_g \geq_R g_2^1$ such that
\begin{enumerate}
  \item $F_1^1$ is refuted at $w_f$ and $G_1^1$ is refuted at $w_g$, therefore $w_f \leq_R c_{i'}^1$, $w_g \leq_R d_{j'}^1$;
  \item $F_1^1$ is true at $w_g$ and $G_1^1$ is true at $w_f$, therefore $w_f \nleq_R d_{j'}^1$, $w_g \nleq_R c_{i'}^1$;
  \item $w_f, w_g \notin \mathfrak{V}'(p) \cup \mathfrak{V}'(q)$, therefore $w_f \neq c_{i'}^1$, $w_g \neq d_{j'}^1$.
\end{enumerate}
Thus, $w_f = c_{i''}^1$, $w_g = d_{j''}^1$ and $(w_f, d_{j'}^1)$, $(c_{i'}^1, w_g)$ and $(w_f, w_g)$ are pairs of incomparable points. So, we have
\begin{equation*}
  \begin{aligned}
    i' & < & i'' & < & j' + 2, \\
    j' & < & j'' & < & i' + 2.
  \end{aligned}
\end{equation*}
Since $|i' - j'| < 2$, it can easily be checked that $i' = j' = l$ and $i'' = j'' = l + 1$ for some $l \geq 0$.

\textbf{Induction step:} $k > 2$. Let the induction assumption be satisfied for all $2 \leq k' < k$, then there are points $w_f \geq_R f_k^1$ and $w_g \geq_R g_k^1$ such that
\begin{enumerate}
  \item $F_{k-1}^1$, $G_{k-2}^1$ are refuted at $w_f$, therefore $w_f \leq_R c_{k+l-2}^1$, $w_f \leq_R d_{k+l-3}^1$;
  \item $G_{k-1}^1$, $F_{k-2}^1$ are refuted at $w_g$, therefore $w_g \leq_R d_{k+l-2}^1$, $w_g \leq_R c_{k+l-3}^1$;
  \item $G_{k-1}^1$ is true at $w_f$ and $F_{k-1}^1$ is true at $w_g$, therefore $w_f \nleq_R d_{k+l-2}^1$ and $w_g \nleq_R c_{k+l-2}^1$.
\end{enumerate}
Thus, $w_f = c_{k+l-1}^1$ and $w_g = d_{k+l-1}^1$.

Since $F_{i}^1$, $G_{i}^1$ are refuted at $w'$ and the formulas $F_{i+1}^1$, $G_{i+1}^1$ are true at $w'$, we have $w' \leq_R c_{i+l-1}^1$, $w' \leq_R d_{i+l-1}^1$ and $w' \nleq_R c_{i+l}^1$, $w' \nleq_R d_{i+l}^1$. Therefore, $m = i + l - 1 \geq i-1$.

If $j \geq 1$, then the proof are similar. Hence, $n \geq j-1$. The lemma is proved.
\end{proof}

\noindent Next, we define the formula $Ax(I)$ simulating the instruction $I$ of the Minsky machine $\mathcal{M}$:
\begin{enumerate}
  \item If $I$ is an instruction of the form $s \ \mapsto \ \triple{t}{1}{0}$, then $Ax(I)$ is the following formula
  \begin{equation*}
    \hat{E}_{t,2,1} \to \hat{E}_{s,1,1};
  \end{equation*}

  \item If $I$ is $s \ \mapsto \ \triple{t}{0}{1}$, then $Ax(I)$ is
  \begin{equation*}
    \hat{E}_{t,1,2} \to \hat{E}_{s,1,1};
  \end{equation*}

  \item If $I$ is $s \ \mapsto \ \triple{t}{-1}{0} / \triple{u}{0}{0}$, then $Ax(I)$ is
  \begin{equation*}
    (\hat{E}_{t,1,1} \to \hat{E}_{s,2,1}) \wedge (\hat{E}_{u,0,*} \to \hat{E}_{s,0,*});
  \end{equation*}

  \item If $I$ is $s \ \mapsto \ \triple{t}{0}{-1} / \triple{u}{0}{0}$, then $Ax(I)$ is
  \begin{equation*}
    (\hat{E}_{t,1,1} \to \hat{E}_{s,1,2}) \wedge (\hat{E}_{u,*,0} \to \hat{E}_{s,*,0}),
  \end{equation*}
\end{enumerate}
and the formula $Ax(\mathcal{M})$ simulating the behavior of $\mathcal{M}$ itself:
\begin{equation*}
  Ax(\mathcal{M}) = \bigwedge\limits_{I \in \mathcal{M}} Ax(I).
\end{equation*}

\begin{lemma} \label{L:Axiom}
$\mathfrak{F} \models Ax(\mathcal{M})$.
\end{lemma}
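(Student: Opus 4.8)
The plan is to prove $\mathfrak{F} \models Ax(\mathcal{M})$ by reducing to the individual instructions and running, for each, a semantic argument powered by Lemma~\ref{L:Semantic:3}. Since validity in a frame distributes over $\wedge$, it suffices to establish $\mathfrak{F} \models Ax(I)$ for every $I \in \mathcal{M}$; and since $Ax(I)$ is itself a conjunction of one or two implications of the shape $\hat{E}_{t,i_t,j_t} \to \hat{E}_{s,i_s,j_s}$ (for the decrement instructions one of the two has target state $u$ in place of $t$), it is enough to prove frame-validity of each such conjunct. Now $\mathfrak{F} \models A \to B$ means precisely that in every model $\langle \mathfrak{F}, \mathfrak{V}' \rangle$ the truth set of $A$ is contained in that of $B$; and since refutation of a formula at a point is inherited by all $\leq_R$-predecessors, the goal reduces to: for every valuation $\mathfrak{V}'$ and every $v$ with $v \not\models \hat{E}_{s,i_s,j_s}$, the formula $\hat{E}_{t,i_t,j_t}$ is refuted at some $\leq_R$-successor of $v$.

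So fix $\mathfrak{M}' = \langle \mathfrak{F}, \mathfrak{V}' \rangle$ and suppose $v \not\models \hat{E}_{s,i_s,j_s}$. By Lemma~\ref{L:Semantic:3} there are $m \geq \varphi(i_s)$, $n \geq \varphi(j_s)$ with $\alpha_0 \stackrel{\mathcal{M}}{\Longmapsto} [s,m,n]$ and $v \leq_R e_{[s,m,n]}$. Since $\mathcal{M}$ is deterministic, the unique instruction with left-hand side $s$ is $I$, and it sends $\triple{s}{m}{n}$ to some configuration $\triple{t}{m'}{n'}$. The indices in the definition of $Ax(I)$ are chosen exactly so that $m' \geq \varphi(i_t)$ and $n' \geq \varphi(j_t)$ in every case: for instance, for $I : s \mapsto \triple{t}{1}{0}$ the conjunct is $\hat{E}_{t,2,1} \to \hat{E}_{s,1,1}$, where $m \geq 0$ gives $m' = m+1 \geq 1 = \varphi(2)$ and $n' = n \geq 0 = \varphi(1)$; for $I : s \mapsto \triple{t}{-1}{0} / \triple{u}{0}{0}$ the conjunct $\hat{E}_{t,1,1} \to \hat{E}_{s,2,1}$ has $m \geq \varphi(2) = 1$ so $m' = m-1 \geq 0 = \varphi(1)$, while the conjunct $\hat{E}_{u,0,*} \to \hat{E}_{s,0,*}$ has $i_s = 0$, which already forces $m = 0 = m'$; the remaining two instruction shapes are symmetric. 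Consequently $[s,m,n] \stackrel{\mathcal{M}}{\Longmapsto} [t,m',n']$, so $e_{[t,m',n']} \in W$ and $e_{[s,m,n]} \leq_R e_{[t,m',n']}$, whence $v \leq_R e_{[t,m',n']}$.

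It remains to show that $\hat{E}_{t,i_t,j_t}$ is refuted at $e := e_{[t,m',n']}$, which by downward inheritance of refutation gives $v \not\models \hat{E}_{t,i_t,j_t}$, as required. Crucially, the application of Lemma~\ref{L:Semantic:3} (through the proof of Lemma~\ref{L:Semantic:2}) has already forced $\mathfrak{V}'$ to be canonical on the $A/B$-part of $\mathfrak{F}$, i.e.\ $a_{-5}^j \in \mathfrak{V}'(x_j)$ and $a_{-4}^j, b_{-5}^j \notin \mathfrak{V}'(x_j)$ for $x_0 = r$, $x_1 = p$, $x_2 = q$. Under any such valuation the formulas $A_i^j$, $B_i^j$ are refuted exactly where Lemma~\ref{L:Semantic} says, the codes $E_{s,m,n}$ as in the unlabelled lemma between Lemmas~\ref{L:Semantic} and~\ref{L:Semantic:2}, and a short induction in the style of Lemma~\ref{L:KeyFormulas:3} pins down the refutation points of the gadgets $F_k^m$, $G_k^m$ inside $\mathfrak{F}$. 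Evaluating at $e$, whose immediate successors are $a_{3t+1}^0, b_{3t+1}^0, a_{m'}^1, b_{m'}^1, a_{n'}^2, b_{n'}^2$ together with some $e$-points, one checks that every antecedent conjunct of $\hat{E}_{t,i_t,j_t}$ is true at $e$ (the $A/B$-conjuncts because $e$ lies below $a_{3t+1}^0$, $b_{3t+1}^0$ etc.\ but not below $a_{3t+2}^0$, $b_{3t+2}^0$ etc., the $F/G$-conjuncts by the computed behaviour together with $m' \geq \varphi(i_t)$, $n' \geq \varphi(j_t)$), while every succedent disjunct is false at $e$ (the $A/B$-disjuncts by Lemma~\ref{L:Semantic}, the $F/G$-disjuncts again by the computation, and in the $*$-cases the extra disjunct $q$ resp.\ $p$ because $e \leq_R a_{-4}^2$ resp.\ $e \leq_R a_{-4}^1$). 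Hence $e \not\models \hat{E}_{t,i_t,j_t}$.

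The main obstacle is the computation in the last step: determining the refutation points of the formulas $F_k^m$, $G_k^m$ in $\mathfrak{F}$ under the forced canonical valuation and checking that they interlock with those of the $A_i^j$, $B_i^j$ exactly so that $\hat{E}_{t,i_t,j_t}$ is genuinely refuted at $e_{[t,m',n']}$; this is a finite but delicate bookkeeping task, structurally parallel to the inductions already carried out in Lemmas~\ref{L:Semantic}, \ref{L:Semantic:2} and~\ref{L:KeyFormulas:3}. The other point that needs care is the instruction-by-instruction verification that the tape indices occurring in $Ax(I)$ and in the $\hat{E}$'s were rigged so that the $\varphi$-bounds propagate through each transformation. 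Everything else — the reduction over the conjunction, the reformulation of frame-validity of an implication, and downward inheritance of refutation — is routine.
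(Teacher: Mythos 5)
Your proposal is correct and follows essentially the same route as the paper's proof: reduce to the individual conjuncts of each $Ax(I)$, invoke Lemma~\ref{L:Semantic:3} to locate the refutation of the succedent at some $e_{[s,m,n]}$ with the $\varphi$-bounds, use the machine step to obtain $e_{[t,m',n']} \in W$ accessible from it, and then refute the antecedent there via the valuation forced canonical by the proofs of Lemmas~\ref{L:Semantic:2} and~\ref{L:Semantic:3}. The only presentational difference is that you argue the contrapositive uniformly for all conjuncts (and are somewhat more explicit about the $\varphi$-bound bookkeeping and the determinism of $\mathcal{M}$), whereas the paper runs a proof by contradiction case by case; the deferred verification you flag at the end is exactly the step the paper also leaves to ``recalling'' the earlier proofs.
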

\begin{proof}
In order to prove the lemma it is sufficient to show that
\begin{equation*}
  \mathfrak{F} \models Ax(I)
\end{equation*}
for each instruction $I$. We need to consider the following 4 cases.

\textbf{Case 1:} $I$ is an instruction of the form $s \ \mapsto \ \triple{t}{1}{0}$, i.e.,
\begin{equation*}
  Ax(I) = \hat{E}_{t,2,1} \to \hat{E}_{s,1,1}.
\end{equation*}
If $(\mathfrak{F},w) \not\models Ax(I)$, then there is a Kripke model $\mathfrak{M}' = \<\mathfrak{F},\mathfrak{V}'\>$ such that $(\mathfrak{M}',w) \models \hat{E}_{t,2,1}$ and $(\mathfrak{M}',w) \not\models \hat{E}_{s,1,1}$. By Lemma~\ref{L:Semantic:3}, $w \leq_R e_{[s,m,n]}$ for some $m \geq 0$ and $n \geq 0$ such that $\alpha_0 \stackrel{\mathcal{M}}{\Longmapsto} [s,m,n]$. If we recall the proofs of Lemmas~\ref{L:Semantic:2} and~\ref{L:Semantic:3}, we obtain that the following statements hold in $\mathfrak{M}'$
\begin{enumerate}
  \item $A_{3t+1}^0$, $B_{3t+1}^0$ are refuted at $a_{3t+1}^0$, $b_{3t+1}^0$ and $A_{3t+2}^0$, $B_{3t+2}^0$ are true at them;
  \item $F_2^1$, $G_2^1$ are refuted at $c_{m+1}^1$, $d_{m+1}^1$ and $F_3^1$, $G_3^1$ are true at them, where $\{c, d\} = \{a,b\}$;
  \item $F_1^2$, $G_1^2$ are refuted at $c_n^2$, $d_n^2$ and $F_2^2$, $G_2^2$ are true at them, where $\{c, d\} = \{a,b\}$.
\end{enumerate}
Since
\begin{equation*}
  \triple{s}{m}{n} \stackrel{\mathcal{M}}{\longmapsto} \triple{t}{m+1}{n},
\end{equation*}
we have that $e_{[t,m+1,n]} \in W$ and $e_{[s,m,n]} \leq_R e_{[t,m+1,n]}$. Hence $\hat{E}_{t,2,1}$ is refuted at $w$, which contradicts to that $(\mathfrak{M}',w) \models \hat{E}_{t,2,1}$. Therefore, $(\mathfrak{F},w) \models Ax(I)$.

\textbf{Case 2:} $I$ is an instruction of the form $s \ \mapsto \ \triple{t}{0}{1}$. The proof is analogous.

\textbf{Case 3:} $I$ is an instruction of the form $s \ \mapsto \ \triple{t}{-1}{0} / \triple{u}{0}{0}$, i.e.,
\begin{equation*}
  (\hat{E}_{t,1,1} \to \hat{E}_{s,2,1}) \wedge (\hat{E}_{u,0,*} \to \hat{E}_{s,0,*}).
\end{equation*}
Let $(\mathfrak{F},w) \not\models Ax(I)$. Then there is a Kripke model $\mathfrak{M}' = \<\mathfrak{F},\mathfrak{V}'\>$ such that
\begin{equation*}
  (\mathfrak{M}',w) \not\models (\hat{E}_{t,1,1} \to \hat{E}_{s,2,1}), (\hat{E}_{u,0,*} \to \hat{E}_{s,0,*}).
\end{equation*}
It is clear that if $(\mathfrak{M}',w) \not\models \hat{E}_{s,2,1}$, then $(\mathfrak{M}',w) \not\models \hat{E}_{t,1,1}$. Let $(\mathfrak{M}',w) \not\models \hat{E}_{s,0,*}$ for some point $w \in W$, then by Lemma~\ref{L:Semantic:3} $w \leq_R e_{[s,0,n]}$ for some $n \geq 0$ such that $\alpha_0 \stackrel{\mathcal{M}}{\Longmapsto} [s,0,n]$. If we recall the proofs of Lemmas~\ref{L:Semantic:2} and~\ref{L:Semantic:3} again, we obtain that
\begin{enumerate}
  \item $A_{3u+1}^0$, $B_{3u+1}^0$ are refuted at $a_{3u+1}^0$, $b_{3u+1}^0$ and $A_{3u+2}^0$, $B_{3u+2}^0$ are true at them;
  \item $A_0^1$, $B_0^1$ are refuted at $a_0^1$, $b_0^1$ and $A_1^1$, $B_1^1$ are true at them.
\end{enumerate}
Since
\begin{equation*}
  \triple{s}{0}{n} \stackrel{\mathcal{M}}{\longmapsto} \triple{u}{0}{n},
\end{equation*}
we have that $e_{[u,0,n]} \in W$ and $e_{[s,0,n]} \leq_R e_{[u,0,n]}$. Hence $(\mathfrak{M}',w) \not\models \hat{E}_{u,0,*}$ and therefore $(\mathfrak{F},w) \models Ax(I)$.

\textbf{Case 4:} $I$ is an instruction of the form $s \ \mapsto \ \triple{t}{0}{-1} / \triple{u}{0}{0}$. The proof is similar.

Thus, $\mathfrak{F} \models Ax(I)$ for each instruction $I \in \mathcal{M}$. The lemma is proved.
\end{proof}

\subsection{Reduction of configuration problem}

In this section we formally reduce the configuration problem of the Minsky machine $\mathcal{M}$ to the derivation problem of the superintuitionistic propositional calculus $\textbf{Int} + Ax(\mathcal{M})$.

\begin{lemma}
$\textbf{Int} + Ax(\mathcal{M}) \vdash E_{t,k,l} \to E_{s_0,m_0,n_0}$ iff $\triple{s_0}{m_0}{n_0} \stackrel{\mathcal{M}}{\Longmapsto} \triple{t}{k}{l}$.
\end{lemma}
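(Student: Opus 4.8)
The plan is to prove the two implications separately. For the direction ``$\triple{s_0}{m_0}{n_0} \stackrel{\mathcal{M}}{\Longmapsto} \triple{t}{k}{l}$ implies derivability'' I would first isolate a \emph{single-step lemma}: whenever $\triple{s}{m}{n} \stackrel{\mathcal{M}}{\longmapsto} \triple{t}{k}{l}$ via the (unique) instruction $I$ numbered $s$, then $\textbf{Int} + Ax(\mathcal{M}) \vdash E_{t,k,l} \to E_{s,m,n}$. Since $Ax(I)$ is a conjunct of $Ax(\mathcal{M})$, it is derivable in $\textbf{Int} + Ax(\mathcal{M})$, hence so is every substitution instance of it; the proof of the single-step lemma then goes by case analysis on the shape of $I$, always with the same recipe: substitute $p \mapsto P_{i',j'}$ and $q \mapsto Q_{i',j'}$ into $Ax(I)$ for the appropriate indices, and rewrite each resulting substituted $\hat{E}$-formula as the corresponding $E$-formula modulo $\textbf{Int}$-equivalence using Lemma~\ref{L:Equivalence}. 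For $I : s \mapsto \triple{t}{1}{0}$, substituting $p \mapsto P_{m-1,n-1}$, $q \mapsto Q_{m-1,n-1}$ into $\hat{E}_{t,2,1} \to \hat{E}_{s,1,1}$ yields, up to $\textbf{Int}$, exactly $E_{t,m+1,n} \to E_{s,m,n}$; the other three instruction shapes are treated the same way, except that the branch of a decrementing instruction that reads a zero counter must use the $\hat{E}_{s,0,*}$ or $\hat{E}_{s,*,0}$ clause of Lemma~\ref{L:Equivalence}, so one additionally has to curry in the missing conjunct $A_{n+1}^2 \wedge B_{n+1}^2$ (respectively $A_{m+1}^1 \wedge B_{m+1}^1$) using the elementary intuitionistic facts that $\to$ is transitive and that $X \to Y$ yields $(Z \to X) \to (Z \to Y)$.

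Granted the single-step lemma, the first implication follows by induction on the number of steps in $\triple{s_0}{m_0}{n_0} \stackrel{\mathcal{M}}{\Longmapsto} \triple{t}{k}{l}$. For $0$ steps we have $\triple{t}{k}{l} = \triple{s_0}{m_0}{n_0}$, and $E_{t,k,l} \to E_{s_0,m_0,n_0}$ is an instance of $A \to A$, derivable in $\textbf{Int}$. For the step, split off the first transition $\triple{s_0}{m_0}{n_0} \stackrel{\mathcal{M}}{\longmapsto} \triple{s'}{m'}{n'} \stackrel{\mathcal{M}}{\Longmapsto} \triple{t}{k}{l}$: the single-step lemma gives $\textbf{Int} + Ax(\mathcal{M}) \vdash E_{s',m',n'} \to E_{s_0,m_0,n_0}$, the induction hypothesis gives $\textbf{Int} + Ax(\mathcal{M}) \vdash E_{t,k,l} \to E_{s',m',n'}$, and transitivity of $\to$ in $\textbf{Int}$ finishes.

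For the converse implication I would argue by contraposition, using the model $\mathfrak{M}$. Assume that $\triple{s_0}{m_0}{n_0} \stackrel{\mathcal{M}}{\Longmapsto} \triple{t}{k}{l}$ fails. Since $\stackrel{\mathcal{M}}{\Longmapsto}$ on equivalence classes is defined via representatives and $\alpha_0 = [s_0,m_0,n_0]$, this means $\alpha_0 \stackrel{\mathcal{M}}{\Longmapsto} [t,k,l]$ fails, hence $e_{[t,k,l]} \notin W$. By the contrapositive of Lemma~\ref{L:Semantic:2}, no valuation on $\mathfrak{F}$ can refute $E_{t,k,l}$ at any point, i.e.\ $\mathfrak{F} \models E_{t,k,l}$; in particular $E_{t,k,l}$ is true at every world of $\mathfrak{M}$. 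On the other hand $e_{\alpha_0} = e_{[s_0,m_0,n_0]} \in W$ because $\stackrel{\mathcal{M}}{\Longmapsto}$ is reflexive, and since $e_{\alpha_0} \leq_R e_{\alpha_0}$ the lemma stating that $w \not\models E_{s,m,n} \Leftrightarrow w \leq_R e_{[s,m,n]}$ gives $e_{\alpha_0} \not\models E_{s_0,m_0,n_0}$. Hence $e_{\alpha_0} \models E_{t,k,l}$ but $e_{\alpha_0} \not\models E_{s_0,m_0,n_0}$, so $e_{\alpha_0} \not\models E_{t,k,l} \to E_{s_0,m_0,n_0}$ and therefore $\mathfrak{F} \not\models E_{t,k,l} \to E_{s_0,m_0,n_0}$. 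As $\mathfrak{F}$ validates all axioms of $\textbf{Int}$ and, by Lemma~\ref{L:Axiom}, also $Ax(\mathcal{M})$, and as frame-validity is preserved by modus ponens and substitution, every theorem of $\textbf{Int} + Ax(\mathcal{M})$ is valid in $\mathfrak{F}$; consequently $\textbf{Int} + Ax(\mathcal{M}) \not\vdash E_{t,k,l} \to E_{s_0,m_0,n_0}$.

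The hard part is the single-step lemma of the first implication: one must track the index bookkeeping of the substitutions $p \mapsto P_{i',j'}$, $q \mapsto Q_{i',j'}$ against the side conditions $1 \le i \le m+1$, $1 \le j \le n+1$ and the separate $m = 0$ or $n = 0$ clauses of Lemma~\ref{L:Equivalence}, and check in all four instruction shapes that the substituted $Ax(I)$ collapses, after a few routine intuitionistic manipulations, to precisely $E_{t,k,l} \to E_{s,m,n}$. By contrast the converse implication is short, its substance already being packaged into Lemmas~\ref{L:Semantic:2} and~\ref{L:Axiom}.
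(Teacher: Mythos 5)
Your proposal is correct and follows essentially the same route as the paper: the reachability-to-derivability direction via a single-step lemma built from substitution instances of $Ax(I)$ and Lemma~\ref{L:Equivalence}, and the other direction via soundness over $\mathfrak{F}$ (Lemma~\ref{L:Axiom}), the refutation of $E_{s_0,m_0,n_0}$ at $e_{[s_0,m_0,n_0]}$, and Lemma~\ref{L:Semantic:2}. Your phrasing of the latter direction as a contrapositive (non-reachability gives $e_{[t,k,l]} \notin W$, hence $\mathfrak{F} \models E_{t,k,l}$) is a trivial logical rearrangement of the paper's direct argument, not a genuinely different method.
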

\begin{proof}
If $\textbf{Int} + Ax(\mathcal{M}) \vdash E_{t,k,l} \to E_{s_0,m_0,n_0}$, then
\begin{equation*}
  \mathfrak{F} \models E_{t,k,l} \to E_{s_0,m_0,n_0}
\end{equation*}
by Lemma~\ref{L:Axiom}. If we recall that $E_{s_0,m_0,n_0}$ is refuted at $e_{[s_0,m_0,n_0]}$, then we obtain that $E_{t,k,l}$ is also refuted at $e_{[s_0,m_0,n_0]}$. By Lemma~\ref{L:Semantic:2}, $e_{[t,k,l]} \in  W$ and
\begin{equation*}
  e_{[s_0,m_0,n_0]} \leq_R e_{[t,k,l]}.
\end{equation*}
Therefore, $\triple{s_0}{m_0}{n_0} \stackrel{\mathcal{M}}{\Longmapsto} \triple{t}{k}{l}$ by definition of Kripke frame $\mathfrak{F}$.

Conversely, if $\triple{s_0}{m_0}{n_0} \stackrel{\mathcal{M}}{\Longmapsto} \triple{t}{k}{l}$, then there exists a finite sequence $\triple{s_i}{m_i}{n_i}$, $0 \leq i \leq \mu$, such that $\triple{s_{\mu}}{m_{\mu}}{n_{\mu}} = \triple{t}{k}{l}$ and
\begin{equation*}
  \triple{s_{i}}{m_i}{n_i} \stackrel{\mathcal{M}}{\longmapsto} \triple{s_{i+1}}{m_{i+1}}{n_{i+1}}
\end{equation*}
for all $i$, $0 \leq i < \mu$. Let $\triple{s_{i+1}}{m_{i+1}}{n_{i+1}}$ be a result of applying of an instruction $I \in \mathcal{M}$.  We need to consider the following 4 cases.

\textbf{Case 1:} $I$ is an instruction of the form $s \ \mapsto \ \triple{t}{1}{0}$. Then $m_{i+1} = m_i + 1$ and $n_{i+1} = n_i$. By Lemma~\ref{L:Equivalence}, we have
\begin{equation*}
  \begin{array}{rcl}
    \textbf{Int} & \vdash & E_{s_{i+1},m_{i+1},n_{i+1}} \leftrightarrow \hat{E}_{s_{i+1},2,1}[P_{m_i-1,n_i-1}, Q_{m_i-1,n_i-1}], \\
    \textbf{Int} & \vdash & E_{s_i,m_i,n_i} \leftrightarrow \hat{E}_{s_i,1,1}[P_{m_i-1,n_i-1}, Q_{m_i-1,n_i-1}].
  \end{array}
\end{equation*}
Therefore $\textbf{Int} + Ax(\mathcal{M}) \vdash E_{s_{i+1},m_{i+1},n_{i+1}} \to E_{s_i,m_i,n_i}$

\textbf{Case 2:} $I$ is an instruction of the form $s \ \mapsto \ \triple{t}{0}{1}$. The proof is analogous.

\textbf{Case 3:} $I$ is an instruction of the form $s \ \mapsto \ \triple{t}{-1}{0} / \triple{u}{0}{0}$. If $m_{i+1} = m_i - 1 \geq 0$ and $n_{i+1} = n_i$. By Lemma~\ref{L:Equivalence}, we have
\begin{equation*}
  \begin{array}{rcl}
    \textbf{Int} & \vdash & E_{s_{i+1},m_{i+1},n_{i+1}} \leftrightarrow \hat{E}_{s_{i+1},1,1}[P_{m_i-2,n_i-1}, Q_{m_i-2,n_i-1}], \\
    \textbf{Int} & \vdash & E_{s_i,m_i,n_i} \leftrightarrow \hat{E}_{s_i,2,1}[P_{m_i-2,n_i-1}, Q_{m_i-2,n_i-1}].
  \end{array}
\end{equation*}
If $m_{i+1} = m_i = 0$ and $n_{i+1} = n_i$. By Lemma~\ref{L:Equivalence}, we have
\begin{equation*}
  \begin{array}{rcl}
    \textbf{Int} & \vdash & E_{s_{i+1},m_{i+1},n_{i+1}} \leftrightarrow \left( A_{n_i+1}^2 \wedge B_{n_i+1}^2 \to \hat{E}_{s_{i+1},0,*}[p, A_{n_i}^2 \vee B_{n_i}^2] \right), \\
    \textbf{Int} & \vdash & E_{s_i,m_i,n_i}             \leftrightarrow \left( A_{n_i+1}^2 \wedge B_{n_i+1}^2 \to \hat{E}_{s_{i},0,*}[p, A_{n_i}^2 \vee B_{n_i}^2] \right).
  \end{array}
\end{equation*}
Therefore $\textbf{Int} + Ax(\mathcal{M}) \vdash E_{s_{i+1},m_{i+1},n_{i+1}} \to E_{s_i,m_i,n_i}$.

\textbf{Case 4:} $I$ is an instruction of the form $s \ \mapsto \ \triple{t}{0}{-1} / \triple{u}{0}{0}$. The proof is similar.

Thus, $\textbf{Int} + Ax(\mathcal{M}) \vdash E_{s_{i+1},m_{i+1},n_{i+1}} \to E_{s_i,m_i,n_i}$ for all $i$, $0 \leq i < \mu$. The lemma is proved.
\end{proof}

Since the configuration problem for the Minsky machine $\mathcal{M}$ and the initial configuration $\triple{s_0}{m_0}{n_0}$ is undecidable by Theorem~\ref{T:Minsky}, we have that the derivation problem for the superintuitionistic propositional calculus $\textbf{Int} + Ax(\mathcal{M})$ is also undecidable. This completes the proof of Theorem~\ref{T:main}.

\section{Conclusion and further research}

In this paper, we established that there is an undecidable superintuitionistic propositional calculus using axioms in only 3 variables. Since there are no undecidable superintuitionistic propositional calculi with axioms containing less than 3 variables, therefore a natural and interesting question is there an intuitionistic propositional formula $A$ containing less than 3 variables for which the superintuitionistic propositional calculus $\mathbf{Int} + A$ is undecidable. In this respect, we note that every intermediate logic axiomatised by a 1-variable formula has the finite model property~\cite{Sobolev:77:FASL} and therefore decidable, but there exists an intermediate logic axiomatised by a 2-variable formula, which is Kripke incomplete~\cite{Shehtman:77:IPL}.


\end{document}